\newtheorem{theorem}{Theorem}[section] \newtheorem{proposition}[theorem]{Proposition}
  \newtheorem{remark}[theorem]{Remark} \newtheorem{defin}[theorem]{Definition}
\theoremstyle{definition}
\theoremstyle{remark}
\numberwithin{equation}{section}
 \newcommand{\mc}{\mathcal} \renewcommand{\leq}{\leqslant} \renewcommand{\geq}{\geqslant}
\begin{document}

\title[Self-propelled motion]{Self-propelled motion of a rigid body inside a density dependent incompressible fluid}

 \date{\today}

\author{\v S. Ne\u{c}asov\'a} \address{Institute of Mathematics, Czech Academy of Sciences, \v Zitn\' a 25, 11567 Praha 1, Czech Republic} \email{matus@math.cas.cz}

\author{M. Ramaswamy}  \address{Chennai Mathematical Institute, H1, SITCOT IT Park, Siruseri, 603103}  \email{mythily@cmi.ac.in}
 
\author{A. Roy}
\address{INRIA Nancy - Grand Est, 615, rue du
  Jardin Botanique. 54600 Villers-l\`es-Nancy, France}
 \email{royarnab244@gmail.com}

\author{A. Schl\"{o}merkemper} \address{Institute of Mathematics, University of W\"urzburg, Emil-Fischer-Str.~40, 97074 W\"urzburg, Germany}
\email {anja.schloemerkemper@mathematik.uni-wuerzburg.de}

\begin{abstract}
 This paper is devoted to the existence of a weak solution to a system describing a self-propelled motion of a rigid body in a viscous fluid in the whole $\mathbb{R}^3$. The fluid is modelled by the incompressible nonhomogeneous Navier-Stokes system with a nonnegative density. The motion of the rigid body is described by the  balance of linear and angular momentum. We consider the case where slip is allowed at the fluid-solid interface through Navier condition and prove the global existence of a weak solution. \end{abstract}

\maketitle

 {\bf Key words.} self-propelled motion, fluid-structure interaction system, Navier-Stokes equations, nonnegative density, Navier boundary conditions
\bigskip

 {\bf AMS subject classifications.} 35Q35, 35Q30, 76D05, 76Z10.

%%%%%%%%%%%%%%%%%%%%%%%%%%%%%%%%%%%%%%%%%%%%%%%%%%%%%%%%%%%%%%%%%%%%%%

\section{Introduction}\label{sec_intro} Fluid-structure interaction (FSI) systems are systems which include a fluid and a solid component. The study of motion of small particles in fluids became one of the main focuses in research in the last 50 years. The presence of the particles has influence on the flow of the fluid and the fluid affects the motion of the particles. It implies that the problem of determining the flow characteristic is highly coupled. For such type
of everyday phenomena with a wide range of applications we refer to, e.g., \cite{FSIforBIO,ParticlesInFlow,G2} and references therein. Systems that
arise from modeling such phenomena are typically nonlinear systems of partial differential equations with a moving boundary or interface. 

We investigate a system where the rigid body moves in an incompressible Newtonian fluid which fills the whole space. The position of the rigid body at any given time moment is determined by two vectors describing the translation of the center of the mass and the rotation around the center of the mass, respectively. A system of six ordinary differential equations (Euler equations) describing the conservation of linear and angular momentum describes the dynamics of a rigid body.
 
 The fluid flow is governed by the three-dimensional incompressible Navier-Stokes  equations. The fluid domain depends on the position of the rigid body. The Navier-Stokes equations are coupled with a system of Euler ODE's via a dynamic and a kinematic coupling condition:
 The dynamic coupling condition is given by the balance of forces acting on the rigid body.
 There are several possibilities for the kinematic coupling condition.
 The no-slip condition, which postulates equality of the velocity of the fluid and of the structure on the boundary of the rigid body, is the most commonly used in the literature since it is the simplest to analyze and it is a physically reasonable condition. 
 However, in many situations, e.g., in close to contact dynamics (see, e.g., \cite{GVHil3,GVHil2}) or
 in the case of rough surfaces (see, e.g., \cite{Bucur2010,Masmoudi2010,JagerMikelic2001}),
  Navier's slip coupling condition is more appropriate since it allows for a discontinuity of the velocity in the tangential component  on the boundary of the rigid body.  In this article we therefore assume the Navier slip condition.
  
 Fluid-rigid body systems have been extensively studied in the last twenty years and some aspects of the well-posedness theory are now well
established. In the case of no self-propulsion, of homogeneous mass densities and constant viscosities, the existence of the unique local-in-time (or small data) solution is known in both two and three dimensions, and for both the slip \cite{SarkaSlipRigidStrong,WangFluidSolid} and the no-slip \cite{CumTak,GGH13,MaityTucsnak18,Takahashi03} coupling.
Further, it is known that a weak solution of Leray-Hopf type exists and is global in time or exists until the moment of contact between the boundary of the container and the rigid body for the slip \cite{ChemetovSarka,GVHil3,MR3165279} and the no-slip case \cite{ConcaRigid00,DE2,DE,GunzRigid00,Starovoitov02}.

One of the novelties of our article is that we will allow for self-propelled motion of the rigid body. Self-propulsion is a common means of  locomotion of macroscopic objects. Examples of such motions are performed by birds, fishes, rockets, submarines, etc. In microscopic world many organisms like ciliates, flagellates move by self-propulsion see, e.g., \cite{G2}. 
  
The case of no-slip boundary conditions for a moving rigid body with self-propulsion was considered in \cite{GAL1,Starovoitov} for different boundary conditions and homogeneous densities. The problem of the self-propelled motion as a control problem was tackled in \cite{Alouges,SanmartinTakahashiTucsnak,SigalottiVivalda} in the case of low Reynolds number (the fluid equations are the Stokes equations) and in \cite{Bressan,ChambrionMunnier} in the case of a potential fluid (high Reynolds number).
Let us also mention the work of Silvestre who investigated the slow motion of steady self-propelled motion and attainability \cite{MR1953783,Sil1}. 

Nonhomogeneous densities in viscous incompressible fluids were investigated in \cite{MR1422251, MR1062395}. To the best of our knowledge, nonhomogeneous densities were not treated in corresponding systems with structure interaction before.

In this article we consider the case of non-steady self-propelled motion in the case of a nonhomogeneous fluid (non-constant density) with Navier boundary conditions.   

To tackle our system, we first apply a global transformation such that the system is posed in a fixed but unbounded domain. In order to prove existence of a weak solution we first establish such an existence on a bounded domain $B_R = \{y\in\mathbb{R}^3 \, :  \, |y| < R\}$ by extending work of \cite{MR1062395} to the setting with a self-propelled rigid body. This is based on a Galerkin method. Due to the fluid-rigid structure interaction our test function depends here on the position of the body. Furthermore, we need to deal with the extra terms that are due to the global change of variables. We prove that all a-priori estimates are independent of $R$. We point out that the renormalized continuity equation enters in our definition of weak solutions to the fluid-structure problem. Up to our knowledge it is the first time that the renormalized continuity equation is considered in a nonhomogeneous fluid with structure. This way we obtain better regularity for the density. In the case without structure, the renormalized continuity equation was introduced by DiPerna, Lions \cite{MR1422251, DL}. 
The final step is then to show the existence of a weak solution in the unbounded domain by letting $R\to\infty$. Here we follow ideas by Planas and Sueur \cite[Theorem~1]{MR3165279}.

The novelties of our paper are as follows: (1) we include self-propelled motion of the rigid body,  (2) we allow for nonnegative and nonhomogeneous densities and (3) replace the no-slip condition at the interface of the solid and the fluid by the Navier slip condition. Furthermore, we consider (4) the case of a viscosity that depends on the mass density.

The outline of this article is as follows. The next section is devoted to an introduction of the mathematical problem, which is reformulated in a fixed domain. In Section~\ref{existence-bounded}, we prove the existence of weak solution in a bounded domain. Section~\ref{existence-unbounded} is devoted to the existence proof in an unbounded domain. In Section~\ref{sec:5} we mention the case of positive density and the case of a viscosity depending on the density and some further remarks and open problems.  In the appendix we present a derivation of the weak formulation of the problem. 
\smallskip

  \section{The mathematical model}

\smallskip

For $T>0$ given and for any time $t \in (0,T)$ let $\mathcal{S}(t) \subset \mathbb{R}^3$ denote a closed, bounded and simply connected  rigid body. We assume that the rest of the space, i.e., 
$\mathbb{R}^{3} \setminus \mathcal{S}(t)=\mathcal{F}(t)$ is filled with a viscous incompressible nonhomogeneous fluid. 
The initial domain of the rigid body is denoted by $\mc{S}_0$ and is assumed to have a smooth boundary. Correspondingly, $\mc{F}_0 =\mathbb{R}^{3} \setminus \mc{S}_0$ is  the initial fluid domain.

Our system of a rigid body moving in a fluid is in the first instance a moving domain problem in an inertial frame in which the velocity of the rigid body is described by
\begin{equation} \label{def-U_S} 
U_{\mc{S}}(x,t)= h'(t)+ R(t) \times (x-h(t)) \; \mbox{for} \quad (x,t) \in \mc{S}(t)\times (0,T). \end{equation} 
Here, $h'(t)$ is the linear velocity of the centre of mass $h$ and $R(t)$ is the angular velocity of the body. 
The solid domain at time $t$ in the same inertial frame is given by
\begin{equation*} \mc{S}(t)= \left\{h(t)+ Q(t)y \mid y\in \mc{S}_{0}\right\}, \end{equation*} %where $\eta[t]y=h(t)+ Q(t)y$.
where $Q(t) \in SO(3)$ is associated to the rotation of the rigid body.  Mathematically, it will turn out useful to define $U_{\mc{S}}$ for all $(x,t)\in \mathbb{R}^3 \times (0,T)$. 
Let $\varrho$, $U$ and $P$ be the density, velocity and pressure of the viscous
incompressible nonhomogeneous fluid, respectively, which satisfy 
\begin{align} \frac{\partial \varrho}{\partial t} + \operatorname{div}\,(\varrho U) &=0, \quad
\varrho(x) \geq 0  \quad \mbox{ in } \mathcal{F}(t) \times (0,T), \label{mass:fluid} \\ \frac{\partial }{\partial t}(\varrho U) + \operatorname{div}\left(\varrho U
\otimes U\right) + \nabla P &= \operatorname{div}(2\nu D(U)), \quad \mathrm{div}\,U=0 \quad \mbox{ in } \mathcal{F}(t) \times (0,T).
 \nonumber %\label{momentum:fluid}
\end{align}
 Here we consider a self-propelled motion of the body $\mathcal{S}(t)$,  which we describe by a vectorial flux $W$ at $\partial \mc{S}(t)$. In this article we consider Navier type conditions at the fluid-structure interface; we prescribe the  normal and tangential parts of the flux by: 
\begin{align*} 
W\cdot N &= 0 \quad \mbox{ on } \partial \mc{S}(t) \times (0,T), \\
 \alpha (W \times N) &= (D(U)N)\times N + \alpha 
(U-U_{\mc{S}})\times N  \quad \mbox{ on } \partial \mc{S}(t)\times (0,T), 
\end{align*}
where $N$ is the unit outward normal to the boundary of $\mathcal{F}(t)$, i.e., directed towards $\mathcal{S}(t)$ and $\alpha$ is a given constant. We refer to Remark~\ref{rem:normal} for a discussion of the assumption that the normal component is zero. 

 For a given viscosity coefficient $\nu >0$, we set $$\Sigma (U,P) = -P\operatorname{I} + 2\nu D(U) \quad \mbox{ with } \quad D(U)= \left[\frac{1}{2}\left(\frac{\partial U_i}{\partial x_j} + \frac{\partial U_j}{\partial x_i}\right)\right]_{i,j=1,2,3}.$$
Let $\varrho_{\mc{S}}$ denote the density of the rigid body. Then $m = \int_{\mc{S}(t)} \varrho_{\mc{S}}(x,t) dx$ is its total mass, which is constant in time and does not change under the coordinate transformation below. Further, 
the moment of inertia $J(t)$ is defined by
$$J(t) =  \int\limits_{\mc{S}(t)} \rho_{\mc{S}}(x,t) 
 \Big(|x-h(t)|^2\mathrm{I} - (x-h(t))\otimes (x-h(t))\Big)\, dx. $$
The ordinary differential equations modeling the dynamics of the rigid body then read 
\begin{align*} mh''&= -\int\limits_{\partial \mc{S}(t)} \Sigma (U,P) N\, d\Gamma , %\label{linear momentum:body}
\\ 
(JR)' &= -\int\limits_{\partial \mc{S}(t)} (x-h) \times \Sigma (U,P) N\, d\Gamma,
%\label{angular momentum:body}
\end{align*}
We suppose the density and the velocity of the fluid to satisfy
\begin{equation*}
\varrho(x,t) \rightarrow 0,\quad U(x,t) \rightarrow 0 \mbox{ as }|x| \rightarrow \infty, 
\end{equation*}
as well as the initial conditions  \begin{equation}\label{initialcond} 
\varrho(x,0)=\rho_0(x) \geq 0,\,\varrho U(x,0)=q_0(x)=\rho_0(x)u_0(x),\, h(0)=0,\, h'(0)=\ell_{0},\, r(0)=r_{0}. 
\end{equation}
Following \cite{GAL1,MR3165279,Serre}, we apply a global change of variables that transforms the system in such a way that it is formulated in a frame which is attached to the rigid body. At time $t=0$, the two frames are assumed to coincide with $h(0) =0$. Hence the transformed system is posed on the fixed domain  $\mc{F}_{0} \times (0,T)$ via the following change of variables: 
\begin{align*} 
u(y,t) &=Q(t)^{\top}U(Q(t)y+h(t),t), \hspace{-3cm}& w(y,t) &= Q(t)^{\top} W(Q(t)y+h(t),t), \\
\rho(y,t)&=\varrho(Q(t)y+h(t),t), & p(y,t)&=P(Q(t)y+h(t),t).
\end{align*} 
Further, the moment of inertia transforms to 
$$ J_0 = Q(t)^\top J(t) Q(t).$$
By \eqref{def-U_S}
and the extension of $U_{\mc{S}}$ to the whole space, we have that  \color{black}
\begin{equation*} 
u_{\mc{S}}(y,t) = Q(t)^\top U_{\mc{S}}(Q(t)y + h(t),t)  =  \ell(t) + r(t) \times y, \quad y\in \mc{F}_0 \times (0,T),
\end{equation*}
where $\ell(t) = Q(t)^{\top}h'(t)$ is the transformed linear velocity and $r(t)=Q(t)^{\top}R(t)$ the transformed angular velocity of the rigid body. The normal and tangential parts of the self-propulsion flux in the new frame are given by: 
\begin{align} 
w\cdot n &= 0 \mbox{ on } \partial \mc{S}_0 \times (0,T), \label{self-propel normal}\\
 \alpha (w \times n) &= (D(u)n)\times n + \alpha 
(u-u_{\mc{S}})\times n  \mbox{ on } \partial \mc{S}_0 \times (0,T) \label{self-propel tangent}
\end{align}
with $n(y,t) = Q(t)^\top N(Q(t)y + h(t))$ for $(y,t) \in \partial \mc{S}_0 \times (0,T)$ denoting the inward pointing normal to $\partial \mc{S}_0$.
With all this at hand, equations \eqref{mass:fluid}--\eqref{initialcond} can be rewritten in the fixed domain as 
\begin{align} \frac{\partial\rho}{\partial t} + \operatorname{div}\,(\rho (u-u_{\mc{S}})) &=0, \quad \rho(y) \geq 0 \quad \mbox{ in } \mc{F}_0 \times (0,T), \label{chg of var mass:fluid} \\ \frac{\partial }{\partial t}(\rho u) + \operatorname{div}\left[u\otimes \rho(u-u_{\mc{S}})\right] + \rho r \times u + \nabla p &= \operatorname{div}(2\nu D(u)), \quad \mathrm{div}\,u=0 \quad \mbox{ in } \mc{F}_0 \times (0,T), \label{chg of var momentum:fluid} \end{align} 
\begin{align} u\cdot n &= u_{\mc{S}} \cdot n \mbox{ on } \partial \mc{S}_{0}\times (0,T), \label{chg of var boundary-1}\\ (D(u)n)\times n &= -\alpha (u-u_{\mc{S}} - w)\times n \mbox{ on } \partial \mc{S}_{0}\times (0,T), \label{chg of var boundary-2} \end{align} 
\begin{align} 
m\ell'&= -\int\limits_{\partial \mc{S}_{0}} \sigma (u,p) n\, d\Gamma + m\ell \times r, \label{chg of var linear momentum:body}\\ J_{0}r' &= -\int\limits_{\partial \mc{S}_{0}} y \times \sigma (u,p) n\, d\Gamma  + J_{0}r \times r, \label{chg of var angular momentum:body} \end{align} \begin{equation}\label{infinite behaviour} \rho(y,t) \rightarrow 0,\quad u(y,t) \rightarrow 0 \mbox{ as }|y| \rightarrow \infty, \end{equation} \begin{equation}\label{chg of var initial cond} \rho(y,0)=\rho_0(y) \geq 0,\,\rho u(y,0)=q_0(y)=\rho_0(y)u_0(y),\, h(0)=0,\, \ell(0)=\ell_{0},\, r(0)=r_{0}, \end{equation} where $$\sigma (u,p) = -p\operatorname{I} + 2\nu D(u)\mbox{ with  }D(u)= \left[\frac{1}{2}\left(\frac{\partial u_i}{\partial y_j} + \frac{\partial u_j}{\partial y_i}\right)\right]_{i,j=1,2,3}.$$

%Here, the initial value of the moment of inertia of the solid is denoted by $J_0$ and the unit normal $n(y)$ to $\partial \mc{S}_{0}$ is directed toward the body with the relation $n(y)=Q(t)^{\top}N(x)$. 

The system of partial and ordinary differential equations that we investigate in this work consists of the equations \eqref{chg of var mass:fluid}--\eqref{chg of var initial cond}.
  
\subsection{Notations and functional framework}
Before initiating our analysis, we collect here some basic notation that will be used throughout.  The linear space $\mathfrak{D}(\Omega)$ consists of
all infinitely differentiable functions that have compact support in $\Omega$.
The norm in a Lebesgue space $L^p$ (resp.\ Sobolev space $W^{k,p}$) is denoted by $\|\cdot\|_p$ (resp. $\|\cdot\|_{k,p}$). We denote by  $W^{-1,r}(\Omega)$ the dual space of $W_0^{1,r'}(\Omega)$, where $1/r + 1/r' =1$.
For convenience, we introduce
\begin{equation*}
H^1(\Omega)=W^{1,2}(\Omega),\ H^{-1}(\Omega)=W^{-1,2}(\Omega). 
\end{equation*}

%All these spaces are equipped with usual norms.
We want to give an appropriate notion of weak solution to system \eqref{chg of var mass:fluid}--\eqref{chg of var initial cond}. In order to do so, 
following \cite{MR3165279}, we introduce the space of divergence free vector functions
\begin{equation*} \mc{H}=\left\{ \phi \in L^2(\mathbb{R}^3) \mid \mathrm{div}\, \phi = 0 \mbox{ in }\mathbb{R}^3 \mbox{ and }D(\phi)=0 \mbox{ in
}\mc{S}_{0} \right\}. \end{equation*}
For any $\phi \in \mc{H}$, there exist $\ell_{\phi} \in \mathbb{R}^3$ and $r_{\phi} \in \mathbb{R}^3$ such that 
\begin{equation} \label{eqn:phiS} \phi(y)= \ell_{\phi}
+ r_{\phi} \times y =: \phi_{\mc{S}}(y)  \quad \mbox{ for all }y \in \mc{S}_{0}.  \end{equation}
 Note that the tangential component of $\phi\in \mc{H}$ is allowed to jump at $\partial \mc{S}_0$, while the normal component has a continuous representative. We remark that, in the integrals on $\partial \mc{S}_0$ below, $\phi$ denotes the trace from the fluid side whereas $\phi_{\mc{S}}$ denotes the trace from the solid side. 
 
We consider the space $L^2(\mathbb{R}^3)$ with the following inner product: 
\begin{equation*} \left(\phi, \psi\right)_{\mc{H}} = \int\limits_{\mc{F}_{0}} \rho\phi \cdot \psi\,dy+ \int\limits_{\mc{S}_{0}} \rho_{\mc{S}_{0}} \phi \cdot \psi\, dy. \end{equation*} This inner product is equivalent to the usual inner product of $L^2(\mathbb{R}^3)$. Furthermore, when $\phi, \psi \in \mc{H}$, we have \begin{equation*}%\label{inner product:H} 
\left(\phi, \psi\right)_{\mc{H}} = \int\limits_{\mc{F}_{0}} \rho\phi \cdot \psi\,dy + m\ell_{\phi}\cdot \ell_{\psi} + J_{0} r_{\phi}\cdot r_{\psi}. 
\end{equation*}
 The norm associated with the above inner product is denoted by $\|\cdot\|=\left(\cdot, \cdot\right)_{\mc{H}}$. Let us define the space 
 \begin{equation*} 
 \mc{V}=\left\{ \phi \in \mc{H} \mid \int\limits_{\mc{F}_{0}} |\nabla \phi(y)|^2\, dy < \infty \right\} \mbox{ with norm }\|\phi\|_{\mc{V}} = \|\phi\| + \|\nabla \phi\|_{L^2(\mc{F}_{0})}. 
 \end{equation*}

 Next we introduce some notations of time-dependent functions that we need later to describe the compactness properties. For $h>0$, the translated function of a function $f$ denoted as $\tau_h f$ is given by \begin{equation*} (\tau_h f)(t) = f(t+h) \mbox{ for any } t\in [0,T]. \end{equation*} Let $E$ be a Banach space.  For $1\leq q\leq \infty$, $0< s< 1$, Nikolskii spaces are defined by \begin{equation}\label{Nikolskii} N^{s,q}(0,T;E)=\left\{f\in L^q(0,T;E) \mid \sup_{h>0} h^{-s} \|\tau_hf-f\|_{L^q(0,T-h;E)} < \infty \right\}. \end{equation} %%%%%%%%%%%%%%%%%%%%%%% 

\subsection{Energy inequality and definition of weak solution} Let $u$ be a smooth solution of \eqref{chg of var mass:fluid}--\eqref{chg of var initial cond} and $\phi \in C^{\infty}([0,T]; \mc{H})$  such that $\phi|_{\overline{\mc{F}_{0}}} \in C^{\infty}([0,T] \times \overline{\mc{F}_{0}})$. 
In the appendix we derive the weak form of our system. It reads 
\begin{align}\label{weakform-momentum}
 \lefteqn{\int\limits_{\mc{F}_{0}} \rho u(y,t)\cdot \phi(y,t)\,dy + m\ell(t)\cdot \ell_{\phi}(t)
 + J_{0}r(t)\cdot r_{\phi}(t)
  - \int\limits_{\mc{F}_{0}}q_0\cdot \phi(y,0)\,dy - m\ell(0)\cdot \ell_{\phi}(0) - J_{0}r(0)\cdot r_{\phi}(0) } \nonumber\\
&=  \int\limits_{0}^{t} \int\limits_{\mc{F}_{0}}\rho u \cdot \frac{\partial \phi}{\partial s}\,dy\,ds + \int\limits_{0}^{t}m\ell\cdot \ell'_{\phi}\,ds
+ \int\limits_{0}^{t}J_{0}r\cdot r'_{\phi}\,ds +\int\limits_{0}^{t}\int\limits_{\mc{F}_{0}} \left[u \otimes \rho(u-u_{\mc{S}})\right] : \nabla
\phi\,dy\,ds  \nonumber \\ &\quad  - \int\limits_{0}^{t}\int\limits_{\mc{F}_{0}} \operatorname{det} (\rho r, u, \phi)\,dy\,ds + \int\limits_{0}^{t}
\operatorname{det}(m\ell,r,\ell_{\phi})\,ds
 + \int\limits_{0}^{t} \operatorname{det}(J_{0}r,r,r_{\phi}) \,ds
- 2 \nu\int\limits_{0}^{t}\int\limits_{\mc{F}_{0}} D(u) : D(\phi)\,dy\,ds
 \nonumber\\
 &\quad - 2\nu\alpha\int\limits_{0}^{t}\int\limits_{\partial\mc{S}_{0}} (u-u_{\mc{S}}-w)\cdot (\phi-\phi_{\mc{S}})\,d\Gamma\,ds .
 %+ 2\nu\alpha\int\limits_{0}^{t}\int\limits_{\partial\mc{S}_{0}} w\cdot (\phi-\phi_{\mc{S}})\,d\Gamma\,ds.
 \end{align}
This relation helps us to obtain an energy estimate for the system \eqref{chg of var mass:fluid}--\eqref{chg of var initial cond}. 
\begin{proposition} A  solution $(\rho,u,\ell,r)$ of the system \eqref{chg of var mass:fluid}--\eqref{chg of var initial cond} with $\rho \in C^\infty([0,T]\times \overline{\mc{F}_0})$, $u\in C^{\infty}([0,T]; \mc{H})$ such that $u|_{\overline{\mc{F}_{0}}} \in C^{\infty}([0,T] \times \overline{\mc{F}_{0}})$ and $\ell, r \in C^\infty([0,T])$ satisfy the following energy inequality: for almost every $t \in [0,T]$,
  \begin{align}\label{energy inequality}
 &\int\limits_{\mc{F}_{0}} \frac{1}{2}\rho |u|^2\,dy + \frac{m}{2}|\ell|^2
 + \frac{J_{0}}{2}|r|^2 + 2 \nu\int\limits_{0}^{t}\int\limits_{\mc{F}_{0}} |D(u)|^2\,dy\,ds +
 \nu\alpha\int\limits_{0}^{t}\int\limits_{\partial\mc{S}_{0}} |u-u_{\mc{S}}|^2\,d\Gamma\,ds \nonumber\\
&\leq \int\limits_{\mc{F}_{0}}\frac{1}{2}\rho_0 |u_0|^2\,dy + \frac{m}{2}|\ell_0|^2 + \frac{J_{0}}{2}|r_0|^2 +
\nu\alpha\int\limits_{0}^{t}\int\limits_{\partial\mc{S}_{0}}|w|^2\,d\Gamma\,ds.
 \end{align}
  \end{proposition}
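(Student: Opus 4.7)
The strategy is to substitute $\phi=u$ into the weak formulation \eqref{weakform-momentum} and then reorganize, using the continuity equation \eqref{chg of var mass:fluid} to turn the combination of the transport and convection terms into a total time derivative of the fluid kinetic energy. The regularity hypotheses on $(\rho,u)$ make $\phi:=u$ an admissible test function. Moreover, since every element of $\mc{H}$ is a rigid motion on $\mc{S}_0$ and the kinematic coupling \eqref{chg of var boundary-1} fixes the normal trace to equal $u_{\mc{S}}\cdot n$, the constant and linear parts of $u|_{\mc{S}_0}$ agree with the solid velocities; that is, $\ell_\phi=\ell$ and $r_\phi=r$. The left-hand side of \eqref{weakform-momentum} therefore collapses to $\int_{\mc{F}_0}\rho|u|^2+m|\ell|^2+J_0 r\cdot r$ evaluated between $0$ and $t$.

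The core step is the identity
\begin{equation*}
\int_0^t\!\!\int_{\mc{F}_0}\!\rho u\cdot\partial_s u\,dy\,ds+\int_0^t\!\!\int_{\mc{F}_0}\![u\otimes\rho(u-u_{\mc{S}})]:\nabla u\,dy\,ds=\tfrac12\int_{\mc{F}_0}\rho|u|^2\Big|_0^t.
\end{equation*}
To obtain it, I integrate the first term by parts in time to get $\tfrac12\int\rho|u|^2\big|_0^t-\tfrac12\int_0^t\!\int(\partial_s\rho)|u|^2$, rewrite the convective integrand as $\tfrac12\rho(u-u_{\mc{S}})\cdot\nabla|u|^2$, and integrate by parts in space. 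The surface contribution vanishes by the no-penetration condition $(u-u_{\mc{S}})\cdot n=0$ on $\partial\mc{S}_0$ coming from \eqref{chg of var boundary-1}, and the remaining term $-\tfrac12\int\operatorname{div}(\rho(u-u_{\mc{S}}))|u|^2$ cancels against the $\partial_s\rho$ term by \eqref{chg of var mass:fluid}. Analogously, the ODE terms $\int_0^t m\ell\cdot\ell'$ and $\int_0^t J_0 r\cdot r'$ integrate to $\tfrac{m}{2}(|\ell|^2-|\ell_0|^2)$ and $\tfrac12(J_0 r\cdot r-J_0 r_0\cdot r_0)$, where for the latter I use symmetry of the constant matrix $J_0$.

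Each of the three Coriolis-type determinant terms vanishes by antisymmetry of the triple product, since after substitution two of the three vector arguments coincide up to a scalar: $\det(\rho r,u,u)=\det(m\ell,r,\ell)=\det(J_0 r,r,r)=0$. What remains of \eqref{weakform-momentum} is the equality
\begin{equation*}
\tfrac12\!\!\int_{\mc{F}_0}\!\!\rho|u|^2+\tfrac{m}{2}|\ell|^2+\tfrac12 J_0 r\cdot r - (\text{initial data}) = -2\nu\!\!\int_0^t\!\!\int_{\mc{F}_0}\!|D(u)|^2 -2\nu\alpha\!\!\int_0^t\!\!\int_{\partial\mc{S}_0}\!(u-u_{\mc{S}}-w)\cdot(u-u_{\mc{S}}).
\end{equation*}
Splitting the last surface integral and applying Young's inequality in the form $2\nu\alpha\,w\cdot(u-u_{\mc{S}})\leq \nu\alpha|w|^2+\nu\alpha|u-u_{\mc{S}}|^2$ absorbs half of the boundary dissipation onto the left-hand side and yields precisely \eqref{energy inequality}. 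The main point requiring care is the spatial integration by parts in the convective term: one must use both the rigid-motion structure of $\mc{H}$ (so that $\phi_{\mc{S}}$ is a genuine rigid velocity) and the kinematic condition \eqref{chg of var boundary-1} to ensure $(u-u_{\mc{S}})\cdot n=0$ and thereby kill the boundary contribution.
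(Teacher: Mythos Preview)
Your proof is correct and follows essentially the same route as the paper: take $\phi=u$ in the weak form \eqref{weakform-momentum}, observe that the determinant terms vanish, use the continuity equation \eqref{chg of var mass:fluid} together with the no-penetration condition \eqref{chg of var boundary-1} to convert the convective term into $\tfrac12\int\partial_s\rho\,|u|^2$, and finish with Young's inequality on the $w\cdot(u-u_{\mc{S}})$ boundary term. The only cosmetic difference is that you package the time-derivative and convective contributions into a single identity $\tfrac12\int_{\mc{F}_0}\rho|u|^2\big|_0^t$ from the outset, whereas the paper first computes the convective term separately as in \eqref{trilinear} and then combines.
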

  \begin{proof}
  To establish the energy inequality, we use the test function $\phi = u$ in \eqref{weakform-momentum}, to obtain
  \begin{align}\label{test equals solution}
 &\int\limits_{\mc{F}_{0}} \rho |u|^2\,dy + m|\ell|^2
 + J_{0}|r|^2
  - \int\limits_{\mc{F}_{0}}\rho_0 |u_0|^2\,dy - m|\ell_0|^2 - J_{0}|r_0|^2  \nonumber\\
&=  \int\limits_{0}^{t} \int\limits_{\mc{F}_{0}}\rho u \cdot \frac{\partial u}{\partial s}\,dy\,ds + \int\limits_{0}^{t}m\ell\cdot\ell'\,ds +
\int\limits_{0}^{t}J_{0}r\cdot r'\,ds +\int\limits_{0}^{t}\int\limits_{\mc{F}_{0}} \left[u \otimes \rho(u-u_{\mc{S}})\right] : \nabla u\,dy\,ds  \nonumber\\
&\quad - 2
\nu\int\limits_{0}^{t}\int\limits_{\mc{F}_{0}} |D(u)|^2\,dy\,ds - 2\nu\alpha\int\limits_{0}^{t}\int\limits_{\partial\mc{S}_{0}} |u-u_{\mc{S}}|^2\ d\Gamma\,ds
 + 2\nu\alpha\int\limits_{0}^{t}\int\limits_{\partial\mc{S}_{0}} w\cdot (u-u_{\mc{S}})\,d\Gamma\,ds.
 \end{align}
 In the above calculation, we have used that
$\operatorname{det} (\rho r, u, u)=
 \operatorname{det}(m\ell,r,\ell) =
  \operatorname{det}(J_{0}r,r,r) = 0$.
 Moreover,  
 \begin{align}
 \int\limits_{0}^{t}\int\limits_{\mc{F}_{0}} \left[u \otimes \rho(u-u_{\mc{S}})\right] : \nabla u\,dy\,ds =&
 \int\limits_{0}^{t}\int\limits_{\mc{F}_{0}} \left[ (\rho(u-u_{\mc{S}})\cdot \nabla)u\right] \cdot u\,dy\,ds \nonumber \\
 =& -\frac12\int\limits_{0}^{t}\int\limits_{\mc{F}_{0}} \operatorname{div} (\rho(u-u_{\mc{S}})) |u|^2\,dy\,ds  + \frac12 \int\limits_{0}^{t}\int\limits_{\partial \mc{S}_{0}} \rho(u-u_{\mc{S}})\cdot n
 |u|^2 \,d\Gamma\,ds \nonumber\\
 =&\frac12 \int\limits_{0}^{t}\int\limits_{\mc{F}_{0}} \frac{\partial \rho}{\partial s}|u|^2\,dy\,ds, \label{trilinear}
 \end{align} 
where the last equality follows from \eqref{chg of var mass:fluid} and \eqref{chg of var boundary-1}.
 By combining \eqref{test equals solution} and \eqref{trilinear}, we obtain
  \begin{align*}
 &\int\limits_{\mc{F}_{0}} \rho |u|^2\,dy + m|\ell|^2
 + J_{0}|r|^2
  - \int\limits_{\mc{F}_{0}}\rho_0 |u_0|^2\,dy - m|\ell_0|^2 - J_{0}|r_0|^2  \\
&=  \int\limits_{\mc{F}_{0}}\int\limits_{0}^{t}\frac{1}{2}\frac{d}{ds} (\rho |u|^2)\,ds\,dy + \int\limits_{0}^{t}\frac{m}{2}\frac{d}{ds}|\ell|^2\,ds +
\int\limits_{0}^{t}\frac{J_{0}}{2}\frac{d}{ds}|r|^2\,ds - 2 \nu\int\limits_{0}^{t}\int\limits_{\mc{F}_{0}} |D(u)|^2\,dy\,ds\\
 & \quad- 2\nu\alpha\int\limits_{0}^{t}\int\limits_{\partial\mc{S}_{0}} |u-u_{\mc{S}}|^2\,d\Gamma\,ds
 + 2\nu\alpha\int\limits_{0}^{t}\int\limits_{\partial\mc{S}_{0}} w\cdot (u-u_{\mc{S}})\,d\Gamma\,ds.
 \end{align*}
 Thus,
 \begin{align*}
 & \int\limits_{\mc{F}_{0}} \frac{1}{2}\rho |u|^2\,dy + \frac{m}{2}|\ell|^2
 + \frac{J_{0}}{2}|r|^2 + 2 \nu\int\limits_{0}^{t}\int\limits_{\mc{F}_{0}} |D(u)|^2\,dy\,ds +
 2\nu\alpha\int\limits_{0}^{t}\int\limits_{\partial\mc{S}_{0}} |u-u_{\mc{S}}|^2\,d\Gamma\,ds  \\
&= \int\limits_{\mc{F}_{0}}\frac{1}{2}\rho_0 |u_0|^2\,dy + \frac{m}{2}|\ell_0|^2 + \frac{J_{0}}{2}|r_0|^2 +
2\nu\alpha\int\limits_{0}^{t}\int\limits_{\partial\mc{S}_{0}} w\cdot (u-u_{\mc{S}})\,d\Gamma\,ds\\ &\leq \int\limits_{\mc{F}_{0}}\frac{1}{2}\rho_0
|u_0|^2\,dy + \frac{m}{2}|\ell_0|^2 + \frac{J_{0}}{2}|r_0|^2 +
\nu\alpha\int\limits_{0}^{t}\int\limits_{\partial\mc{S}_{0}}|u-u_{\mc{S}}|^2\,d\Gamma\,ds +
\nu\alpha\int\limits_{0}^{t}\int\limits_{\partial\mc{S}_{0}}|w|^2\,d\Gamma\,ds
 \end{align*}
and \eqref{energy inequality} follows.
\end{proof}
 The relation \eqref{weakform-momentum} motivates us to define weak solutions in the following way.
 \begin{defin}
Let $T> 0$. A pair $(\rho,u)$ is a weak solution to system \eqref{chg of var mass:fluid}--\eqref{chg of var initial cond} if the following conditions hold true:
%for all $R\in (0,\infty)$: 
\begin{itemize} \item $\rho \geq 0, \quad \rho \in L^{\infty}((0,T)\times \mathbb{R}^3)$. %\cap C([0,T); L^2_{\operatorname{loc}} (\mathbb{R}^3)),$ for all $1\leq p < \infty$, where $B_R=\{y \in \mathbb{R}^3 \mid |y| < R\$. 
\item $u \in L^2(0,T; \mc{V}),\quad \rho|u|^2 \in L^{\infty}(0,T; L^1(\mathbb{R}^3))$.  
\item The equation of continuity \eqref{chg of var mass:fluid} is satisfied in the weak sense, i.e., 
\begin{equation*} \int\limits_{\mathbb{R}^3} \Big(\rho(y,T)\phi(y,T) -\rho_0\phi(y,0)\Big) \, dy  = \int\limits_{0}^{T}\int\limits_{\mathbb{R}^3} \left[\rho \frac{\partial \phi}{\partial s} + \rho (u-u_{\mc{S}}): \nabla \phi\right]\,dy\,ds, \end{equation*} 
for any test function $\phi \in \mathfrak{D}([0,T)\times \mathbb{R}^3)$. 
%\footnote{AS: \color{magenta} Why do we need the renormalized continuity equation here? Can we delete this here? If yes, we would also have to delete the related comments in the proofs. If not, it would be good to motivate why we want to have this in the definition.\color{red} SN: I again checked- on page 23 in Lions book is exactly written that regularity of nonnegative density   was improve due to transport theory by DiPerna Lions and it is exactly proved in the book,so we can let as it is. Simon's result is weaker. \color{blue} AR: The convergence of $\rho^n$ to $\rho$ in $C([0,T]; L^p_{loc})$ is obtained by renormalizing the linear transport equations by $z\mapsto z^p$ functions. So, stating renormalized equation is very important.} 
Also, a renormalized continuity equation holds in a weak sense, i.e., \begin{equation*} 
\int\limits_{\mathbb{R}^3} b(\rho)\phi \, dy\Big|_{0}^{T}  = \int\limits_{0}^{T}\int\limits_{\mathbb{R}^3} b(\rho)\left[\frac{\partial \phi}{\partial s} + (u-u_{\mc{S}}): \nabla \phi\right]\,dy\,ds, 
\end{equation*} 
for any test function $\phi \in \mathfrak{D}([0,T)\times \mathbb{R}^3)$ and for all $b \in C^1(\mathbb{R})$. 
\item Balance of linear momentum holds in a weak sense, i.e., for all  $\phi \in C^{\infty}([0,T]; \mc{H})$  such that $\phi|_{\overline{\mc{F}_{0}}} \in C^{\infty}([0,T] \times \overline{\mc{F}_{0}})$  and for all $t \in [0,T]$, the relation
    \eqref{weakform-momentum} holds. 
\end{itemize}
  \end{defin}

  \begin{remark}
  \smallskip
  \begin{itemize}
  \item There is no a priori reason that the momentum $\rho u$ is continuous in time. We only have that, for any $t_0 \in [0,T]$,
  the function
     \begin{equation*}%\label{eq:property}
     t \mapsto \int_{\mathbb{R}^3} (\rho u)(t,y)\cdot \phi(y) dy 
     \end{equation*}
is continuous in a certain neighbourhood of $t_0$, provided
$\phi = \phi(y) \in  \mathfrak{D}(\mathbb{R}^3)$ and $D(\phi)=0$
on a neighbourhood of $\mc S(t_0)$. 
\item The introduction of the renormalized continuity equation in the definition of weak solutions to our system yields that  $ \rho \in C([0,T]; L^q_{\operatorname{loc}}(\mathbb{R}^3))$ for all $q\in [1,\infty)$, see below for details. 
\end{itemize}
  \end{remark}
 In the following theorem we state the main result of our paper regarding the global existence of a weak solution to system \eqref{chg of var mass:fluid}--\eqref{chg of var initial cond}.
  \begin{theorem}\label{main theorem}
 Let $\mc{S}_0$ be a bounded,  closed, simply connected set with smooth boundary and $\mc{F}_0= \mathbb{R}^3\setminus \mc{S}_0$. Assume that $w$ satisfies \eqref{self-propel normal}--\eqref{self-propel tangent} and that there exist
 constants $c_1, c_2 > 0$ such that 
 \begin{align*}
&\rho_0 \geq 0,\, \rho_0 |_{\mc{S}_0} \in [c_1, c_2],\, \rho_0 \in L^{\infty}(\mathbb{R}^3),\,  u_{0} \in \mc{H},\\ & q_0=0\mbox{ a.e.\  on }\{\rho_0 = 0\},\ \frac{q_0^2}{\rho_0} \in L^1(\mc{F}_0).
 \end{align*}
 Then, for any $T>0$ there exists a weak solution $(\rho,u)$ to system \eqref{chg of var mass:fluid}--\eqref{chg of var initial
 cond}.
  Moreover, we have 
 \begin{equation*}
 \inf_{\mathbb{R}^3}\rho_0 \leq \rho \leq \sup_{\mathbb{R}^3}\rho_0, \quad \rho \in C([0,T]; L^q_{\operatorname{loc}}(\mathbb{R}^3))\,\forall\ q\in [1,\infty),\quad p \in W^{-1,\infty}(0,T; L^2(\mc{F}_0)),
 \end{equation*}
and for a.e.\
  $t \in [0,T]$, the energy inequality \eqref{energy inequality} holds.
  \end{theorem}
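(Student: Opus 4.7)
\medskip

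\noindent\textbf{Proof plan for Theorem \ref{main theorem}.}

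The plan is to obtain the weak solution in $\mathbb{R}^3$ as the limit of weak solutions on a large ball $B_R=\{|y|<R\}$ containing $\mc{S}_0$, the construction on $B_R$ being itself obtained by a Galerkin scheme combined with a smoothing of the continuity equation in the spirit of DiPerna--Lions and \cite{MR1062395}. More precisely, for each $R>0$ I introduce the spaces $\mc{H}_R$ and $\mc{V}_R$ defined exactly as $\mc{H},\mc{V}$ but with $\mc{F}_0$ replaced by $\mc{F}_0\cap B_R$ and with homogeneous Dirichlet condition on $\partial B_R$, and fix a Galerkin basis $\{\phi_k\}_{k\geq 1}\subset \mc{V}_R$. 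Because the basis functions already satisfy $D(\phi_k)=0$ on $\mc{S}_0$, each $\phi_k$ is of the form $\ell_{\phi_k}+r_{\phi_k}\times y$ on $\mc{S}_0$ and the rigid--body ODEs \eqref{chg of var linear momentum:body}--\eqref{chg of var angular momentum:body} are automatically built into the projected momentum equation, so no separate treatment of the solid part is needed. The self-propelled flux $w$ enters only as a prescribed forcing on the right-hand side of the boundary integral in \eqref{weakform-momentum}.

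At the Galerkin level $n$, I look for $u_n(t)=\sum_{k=1}^n c_k^n(t)\phi_k$ coupled with a density $\rho_n$ obtained by solving a smoothed transport equation
\begin{equation*}
\partial_t \rho_n + \operatorname{div}\bigl(\rho_n(\tilde u_n - u_{\mc{S}})\bigr)=\varepsilon \Delta \rho_n,
\end{equation*}
where $\tilde u_n$ is a suitable mollification of $u_n-u_\mc{S}$ that extends to a field tangent to $\partial B_R$ and such that the initial data is smoothed. This yields a strictly positive regular $\rho_n$, so standard fixed-point arguments on a short interval give a local solution to the ODE system for $c_k^n$, which is then extended globally thanks to the energy identity. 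I next pass $\varepsilon\to 0$ to recover the unregularized continuity equation at the Galerkin level; here Lions's $L^\infty$ maximum principle gives the $L^\infty$ bound $\inf\rho_0\leq \rho_n\leq \sup\rho_0$, which in particular keeps $\rho_n|_{\mc{S}_0}\in[c_1,c_2]$. The energy inequality \eqref{energy inequality}, whose derivation at the smooth level has already been carried out, applied with $\phi=u_n$ delivers uniform (in $n$, $\varepsilon$ and $R$) bounds for $\sqrt{\rho_n}\,u_n$ in $L^\infty(0,T;L^2)$, for $u_n$ in $L^2(0,T;\mc{V}_R)$, and for $u_n-u_{\mc{S},n}$ in $L^2(0,T;L^2(\partial\mc{S}_0))$.

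The hard part is the passage to the limit $n\to\infty$ in the nonlinear term $\rho_n u_n\otimes(u_n-u_{\mc{S},n})$, because the density is only nonnegative and the coupling of rigid and fluid velocities forces the test functions to depend on the unknown motion of $\mc{S}_0$ (already frozen here thanks to the change of variables, which is a real simplification, but the geometry of $\mc{V}$ still requires $\phi$ to be rigid on $\mc{S}_0$). For this I will follow the Lions strategy: derive a uniform Nikolskii-type estimate on $\rho_n u_n$ of the form $\rho_n u_n \in N^{s,q}(0,T;W^{-1,r}(B_R))$ with $s>0$ using the momentum equation tested against a fixed $\phi_k$, combine it with the spatial regularity of $u_n$, and apply an Aubin--Lions lemma in the variant given in \cite{MR1062395} to obtain strong $L^2$ convergence of $\sqrt{\rho_n}\,u_n$ and hence of $\rho_n u_n\otimes u_n$. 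The renormalized continuity equation is then recovered from DiPerna--Lions's commutator lemma, which in turn yields the continuity $\rho\in C([0,T];L^q_{\operatorname{loc}})$; this is also what gives the strong convergence of $\rho_n$ needed to handle the terms $\rho_n r_n\times u_n$ and $u\otimes\rho_n u_{\mc{S},n}$ on the rigid--body side. The pressure $p\in W^{-1,\infty}(0,T;L^2(\mc{F}_0))$ is reconstructed by a De Rham argument from the momentum equation tested against compactly supported divergence-free fields not subject to the rigidity constraint.

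Finally, for the passage $R\to\infty$, I extend the solutions $(\rho_R,u_R)$ by zero outside $B_R$, observe that all the above estimates are $R$-independent (which is why the Galerkin scheme was formulated directly in terms of $\sqrt{\rho}\,u$ and the dissipation $\|D(u)\|_{L^2}$ rather than $\|u\|_{L^2}$, since a Poincaré inequality is not available on $\mathbb{R}^3$), and repeat the compactness argument using a diffeomorphism to a sequence of bounded open sets exhausting $\mathbb{R}^3$ and a diagonal extraction, exactly as in Planas--Sueur \cite[Theorem~1]{MR3165279}. The only genuinely new point compared with \cite{MR3165279} is the need to propagate the renormalized continuity equation and the boundary slip term through the limit, for which the $L^2$ boundary estimate and the strong $L^q_{\operatorname{loc}}$ convergence of $\rho_R$ obtained at the previous stage are exactly what is needed.
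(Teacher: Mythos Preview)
Your proposal is correct and follows the same overall architecture as the paper: Galerkin approximation on $B_R$, energy estimate, Nikolskii/Aubin--Lions compactness for $\rho_N u_N$, DiPerna--Lions renormalization, De Rham for the pressure, and then $R\to\infty$ via \cite{MR3165279}.

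There is one avoidable detour. You regularize the continuity equation by $\varepsilon\Delta\rho_n$ and then pass $\varepsilon\to 0$ at the Galerkin level. The paper skips this entirely: since the Galerkin velocity satisfies $u_N\in C^1([0,T_N];X_N)$, the transport equation \eqref{approx2} is solved directly by characteristics, giving $\rho_N\in C^1$ and the bounds $\frac{1}{N}+\inf\rho_0\le\rho_N\le\frac{1}{N}+\sup\rho_0$ immediately. Your parabolic regularization works, but it is not needed here and costs an extra limit. A second, more minor difference: at the bounded-domain stage the paper does not establish strong convergence of $\sqrt{\rho_N}\,u_N$; it instead identifies the weak limit of $u_N\otimes\rho_N(u_N-u_{\mc{S},N})$ by combining the strong convergence of $\rho_N u_N$ in $L^2(0,T;H^{-1})$ with the weak convergence of $u_N$ in $L^2(0,T;H^1)$ through the product continuity $H^1\times H^{-1}\hookrightarrow W^{-1,3/2}$, and then upgrades to weak convergence in $L^{4/3}(0,T;L^2)$. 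The strong $L^2_{\operatorname{loc}}$ convergence of $\sqrt{\rho_R}\,u_R$ is invoked only at the $R\to\infty$ step, citing Lions. Either route closes the argument.
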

  In order to prove the main result, as a first step, we will prove the existence of a weak solution in a bounded domain $B_R$, where $B_R=\{y \in
  \mathbb{R}^3 \mid |y| < R\}$, see Section~\ref{existence-bounded}. Thereafter, we take $R$ to infinity to establish the existence of a weak solution for the whole space $\mathbb{R}^3$ in Section~\ref{existence-unbounded}.
    %%%%%%%%%%%%%%%%%%%%%%%%%%%%%%%%%%%%%%%%%%%%%%%%%%%%%%%%%%%%%%%%%

    %%%%%%%%%%%%%%%%%%%%%%%%%%%%%%%%%%%%%%%%%%%%%%%%%%%%%%%%%%%%%%%%%
\section{Existence of a weak solution in a bounded domain}\label{existence-bounded} 
In this section we consider the system \eqref{chg of var mass:fluid}--\eqref{chg of var initial
cond} in a smooth bounded domain $\Omega\subset \mathbb{R}^3$, along with the complementary boundary condition \begin{equation}\label{boundary-bdd domain} u(y,t)= 0, \quad y\in
\partial \Omega. \end{equation}
We assume $\mc{S}_0 \subset \Omega$ and set $\mc{F}_0 = \Omega \setminus \mc{S}_0$.  Let us define a weak solution to system \eqref{chg of var mass:fluid}--\eqref{chg of var initial cond} in any bounded
domain $\Omega$. To do that, we introduce two spaces $\mc{H}_{\Omega}$, $\mc{V}_{\Omega}$  for the bounded domain
$\Omega$, which are analogous to $\mc{H}$, $\mc{V}$. We set 
\begin{equation*} \mc{H}_{\Omega}=\left\{ \phi \in L^2({\Omega}) \mid \mathrm{div}\, \phi = 0 \mbox{ in }{\Omega} \mbox{ and
}D(\phi)=0 \mbox{ in }\mc{S}_{0} \right\}, \end{equation*} \begin{equation*} \mc{V}_{\Omega}=\left\{ \phi \in \mc{H}_{\Omega} \mid \phi=0 \mbox{ on
}\partial\Omega, \int\limits_{\mc{F}_{0}} |\nabla \phi(y)|^2\, dy < \infty \right\}. \end{equation*}
 \begin{defin}
Let $T> 0$ and let $\Omega\subset \mathbb{R}^3$ be a smooth bounded domain. A pair $(\rho,u)$ is a weak solution to system 
\eqref{chg of var mass:fluid}--\eqref{chg of var initial cond} with \eqref{boundary-bdd domain} if the following conditions hold true:
\begin{itemize} \item $\rho \geq 0, \quad \rho\in L^{\infty}((0,T) \times \Omega)$. %for all $1\leq p < \infty$, where $B_R=\{y \in \mathbb{R}^3 \mid |y| < R\$. 
\item $u \in L^2(0,T; \mc{V}_{\Omega})$, $\rho|u|^2 \in L^{\infty}(0,T; L^1(\Omega))$. 
\item The equation of continuity \eqref{chg of var mass:fluid} is satisfied in the weak sense, i.e., 
\begin{equation*} \int\limits_{\Omega} \Big(\rho(y,T)\phi(y,T) -\rho_0\phi(y,0)\Big) \, dy  = \int\limits_{0}^{T}\int\limits_{\Omega} \left[\rho \frac{\partial \phi}{\partial s} + \rho (u-u_{\mc{S}}): \nabla \phi\right]\,dy\,ds, \end{equation*} 
for any test function $\phi \in \mathfrak{D}([0,T)\times \Omega)$. 
Also, a renormalized continuity equation holds in a weak sense, i.e., \begin{equation*} 
\int\limits_{\Omega} b(\rho)\phi \, dy\Big|_{0}^{T}  = \int\limits_{0}^{T}\int\limits_{\Omega} b(\rho)\left[\frac{\partial \phi}{\partial s} + (u-u_{\mc{S}}): \nabla \phi\right]\,dy\,ds, 
\end{equation*} 
for any test function $\phi \in \mathfrak{D}([0,T)\times \Omega)$ and for all $b \in C^1(\mathbb{R})$. 
\item Balance of linear momentum holds in a weak sense, i.e., for all  $\phi \in C^{\infty}([0,T];
\mc{H}_{\Omega})$  such that $\phi|_{\overline{\mc{F}_{0}}} \in C^{\infty}([0,T] \times \overline{\mc{F}_{0}})$  and for all $t \in [0,T]$, the relation
    \eqref{weakform-momentum} holds. % with $\mc{F}_0=\Omega \setminus \mc{S}_0$.
\end{itemize}
  \end{defin}
  Next we assert the existence of a weak solution in a bounded domain.
  \begin{theorem}\label{existence:bounded domain}
 Let $R$ be sufficiently large and $\Omega=B_R$. Set $\mc{F}_0= \Omega\setminus \mc{S}_0$, where $\mc{S}_0\subset B_R$ is a bounded, closed, simply connected set with smooth boundary. Assume that $w$ satisfies \eqref{self-propel normal}--\eqref{self-propel tangent} and that there exist constants $c_1, c_2 > 0$ such that
 \begin{align*}
&\rho_0 \geq 0,\, \rho_0 |_{\mc{S}_0} \in [c_1, c_2],\, \rho_0 \in L^{\infty}(\Omega),\,  u_{0} \in \mc{H}_{\Omega},\\ & q_0=0\mbox{ a.e.\  on }\{\rho_0 = 0\},\ \frac{q_0^2}{\rho_0} \in L^1(\mc{F}_0).
 \end{align*}
 Then for any time $T>0$ there exists a weak solution $(\rho_R,u_R)$ to system \eqref{chg of var mass:fluid}--\eqref{chg of var initial
 cond} satisfying \eqref{boundary-bdd domain} on the time interval $(0,T)$. Moreover, we have
 \begin{align*}
 &\inf_{\Omega}\rho_0 \leq \rho_{R} \leq \sup_{\Omega}\rho_0, \quad \rho_{R} \in C([0,T]; W^{-1,\infty}(\Omega)) \cap C([0,T]; L^q(\Omega))\, \quad \forall\ q\in [1,\infty),\\
  &p_{R}\in W^{-1,\infty}(0,T; L^2(\mc{F}_0)),
 \end{align*}
  and for a.e.\ $t \in [0,T]$, the energy inequality \eqref{energy inequality} holds for $(\rho_R,u_R)$.
 \end{theorem}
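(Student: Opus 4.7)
The plan is to follow a Galerkin scheme adapted from Simon~\cite{MR1062395}, with two modifications dictated by the present setting: the approximating velocities must live in the constrained space $\mc{V}_\Omega$ (so that they are rigid on $\mc{S}_0$ and the fluid-structure coupling is automatic), and the transport equation for $\rho$ must be driven by the relative velocity $u-u_{\mc S}$, which accounts for the change of frame. First, I would regularise the initial density by $\rho_0^\varepsilon := \rho_0 + \varepsilon$ to avoid vacuum at the approximation level, reserving the passage $\varepsilon \to 0$ for the final step. Next, fix a smooth Hilbert basis $\{\phi_j\}_{j\geq 1}$ of $\mc{V}_\Omega$ obtained as eigenfunctions of an elliptic problem associated with the inner product of $\mc V_\Omega$; each $\phi_j$ encodes a pair $(\ell_{\phi_j}, r_{\phi_j})$ in the rigid part, so that the momentum and angular-momentum ODEs are realised as projections of the weak formulation \eqref{weakform-momentum} onto $\mathrm{span}(\phi_1,\dots,\phi_N)$.

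At level $N$, write $u_N(t,\cdot)=\sum_{j=1}^N g_j^N(t)\phi_j$, and couple the finite-dimensional ODE for the coefficients $g^N=(g_1^N,\dots,g_N^N)$ with the transport equation
\begin{equation*}
\partial_t \rho_N + \operatorname{div}(\rho_N(u_N-u_{\mc S,N}))=0,\qquad \rho_N(0)=\rho_0^\varepsilon,
\end{equation*}
whose unique solution is constructed by characteristics because $u_N-u_{\mc S,N}$ is smooth in $y$ and satisfies $(u_N-u_{\mc S,N})\cdot n=0$ on $\partial\mc S_0$ and $u_N=0$ on $\partial\Omega$; the characteristic formula also gives $\inf \rho_0 \leq \rho_N \leq \sup \rho_0 + \varepsilon$. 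The ODE system for $g^N$, written with the $(\cdot,\cdot)_{\mc H}$-inner product, has locally Lipschitz coefficients in $g^N$ once $\rho_N$ is known, so a Picard argument yields a local solution on $[0,T_N)$. Testing the Galerkin equation against $u_N$ itself (exactly as in the proof of the proposition producing \eqref{energy inequality}, using the trilinear cancellation \eqref{trilinear} and the algebraic identities $\det(\rho r, u,u)=\det(m\ell,r,\ell)=\det(J_0 r,r,r)=0$, and Young's inequality on the $w\cdot(u-u_{\mc S})$ boundary term) yields the analogue of \eqref{energy inequality} for $(\rho_N,u_N)$, independently of $N$ (and of $\varepsilon$ up to a bounded factor). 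This prevents blow-up, so $T_N=T$.

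The core step is the passage $N\to\infty$. The energy inequality provides bounds on $\sqrt{\rho_N}u_N$ in $L^\infty(0,T;L^2(\Omega))$, on $u_N$ in $L^2(0,T;\mc V_\Omega)$, on $(\ell_N,r_N)$ in $L^\infty(0,T)$, and on the Navier trace $(u_N-u_{\mc S,N}-w)$ in $L^2((0,T)\times \partial \mc S_0)$. Combined with the Galerkin equation, these produce Nikolskii-type bounds \eqref{Nikolskii} on $\rho_N u_N$ in time with values in some negative-order Sobolev space, and an Aubin--Lions / Simon compactness argument gives $u_N\to u$ strongly in $L^2(0,T;L^2(\Omega))$. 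The hard part here is the strong convergence of $\rho_N$, which I would handle via the DiPerna--Lions theory of renormalized solutions to the transport equation: because $u-u_{\mc S}\in L^2(0,T;W^{1,2})$ is divergence-free and tangential on $\partial\mc S_0$ and vanishes on $\partial\Omega$, any bounded weak solution of the continuity equation is automatically a renormalized solution, hence $\rho_N \to \rho$ in $C([0,T];L^q(\Omega))$ for every $q\in[1,\infty)$, which is exactly what is needed to pass to the limit in the convective term $u_N\otimes \rho_N(u_N-u_{\mc S,N})$ and in the momentum--density term $\rho_N u_N$. With these convergences the weak form \eqref{weakform-momentum} passes to the limit against any fixed test function from $\mathrm{span}(\phi_j)$, and density of these in $C^\infty([0,T];\mc H_\Omega)$ (with smooth fluid-side restriction) extends the identity to the full test space.

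Having obtained $(\rho,u)$, I verify the renormalized continuity equation directly from DiPerna--Lions, which in turn yields $\rho\in C([0,T];L^q(\Omega))$. The bounds on $\rho$ pass from $\rho_N$ by weak-$*$ lower/upper semicontinuity. Finally, the pressure is reconstructed by a de~Rham type argument applied to the distributional residual of the momentum equation over test functions of zero flux through $\partial\mc S_0$ and zero rigid mean, which produces $p\in W^{-1,\infty}(0,T;L^2(\mc F_0))$. The energy inequality for $(\rho,u)$ is obtained as the liminf of its Galerkin counterparts via lower semicontinuity of the norms. The passage $\varepsilon\to 0$ is the same argument carried out with the extra uniform bound $\rho\geq \inf\rho_0$, completing the proof. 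The main obstacle throughout is the strong convergence of $\rho_N$ and of $\rho_N u_N$ in the presence of vacuum, which is why the renormalized formulation is built into the definition of weak solution from the outset.
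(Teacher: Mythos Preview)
Your overall strategy (Galerkin in $\mc V_\Omega$, transport by characteristics, energy estimate, compactness, DiPerna--Lions for strong convergence of $\rho$, de~Rham for the pressure) matches the paper's. However, the compactness step and the final $\varepsilon\to 0$ passage, as you have written them, contain a genuine gap.

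You write that Nikolskii bounds on $\rho_N u_N$ in a negative Sobolev space, together with Aubin--Lions, yield $u_N\to u$ strongly in $L^2(0,T;L^2(\Omega))$. They do not: those bounds give compactness of $\rho_N u_N$, not of $u_N$. Strong $L^2$ convergence of the velocity can be recovered only if you have a uniform lower bound $\rho_N\geq\varepsilon>0$, and then the Nikolskii estimate should be on $u_N$ itself (as in the positive-density case treated in \cite[Ch.~VI]{MR2986590}). At fixed $\varepsilon$ your argument can be repaired this way. The real problem is the sentence ``the passage $\varepsilon\to 0$ is the same argument carried out with the extra uniform bound $\rho\geq\inf\rho_0$'': when $\inf\rho_0=0$ (the case of interest here) that bound is empty, the $L^\infty L^2$ bound on $u_\varepsilon$ is lost, and strong $L^2$ convergence of $u_\varepsilon$ is simply unavailable. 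So the nonlinear terms cannot be handled by the route you describe at this last step.

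The paper avoids the two-layer limit altogether by tying the density regularisation to the Galerkin level, taking $\rho_{0N}$ with $\frac1N+\inf\rho_0\leq\rho_{0N}\leq\frac1N+\sup\rho_0$, and performing a single limit $N\to\infty$ that is robust to vacuum. The key point is that one never asks for strong convergence of $u_N$: instead one shows $\rho_N\to\rho$ strongly in $C([0,T];W^{-1,\infty}(\Omega))$ and $\rho_N u_N\to\rho u$ strongly in $L^2(0,T;H^{-1}(\Omega))$ (via Aubin--Lions and the Nikolskii bound $\rho_N u_N\in N^{1/4,2}(0,T;W^{-1,3/2})$), while $u_N\rightharpoonup u$ only weakly in $L^2(0,T;\mc V_\Omega)$. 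The nonlinear terms are then identified using the continuity of the bilinear maps $H^1\times W^{-1,\infty}\to W^{-1,6}$ and $H^1\times H^{-1}\to W^{-1,3/2}$ from \cite[Lemma~3]{MR1062395}, together with an interpolation bound showing $u_N\otimes\rho_N(u_N-u_{\mc S,N})$ is bounded in $L^{4/3}(0,T;L^2)$. The DiPerna--Lions step is used afterwards, to upgrade the density convergence to $C([0,T];L^q)$. If you want to keep your two-parameter scheme, the $\varepsilon\to 0$ passage must be redone along these momentum-based lines; otherwise it is simpler to merge $\varepsilon$ and $N$ as the paper does.
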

 \begin{proof}
 The proof is divided into several steps. At first we construct an approximate solution and then establish the existence of a weak solution to system
 \eqref{chg of var mass:fluid}--\eqref{chg of var initial cond} as the limit of this approximation.
 \vspace{.2cm}
 
\noindent \underline{Step 1: Construction of $N-$th level approximate solution}

 Since the set
\begin{equation*} \mc{Y}=\left\{\xi \in C^1(\overline{\Omega})\cap \mc{V}_{\Omega}\mid \operatorname{div}\xi =0\mbox{ in }\Omega\mbox{ and
}D(\xi)=0\mbox{ in }\mc{S}_0\right\} \end{equation*} is dense in $\mc{V}_{\Omega}$, we can choose an orthonormal  basis $z_i \in \mc{Y}$, for all $i\geq 1$ of the
Hilbert space $\mc{V}_{\Omega}$. Define the $N$-dimensional space $X_N$ by \begin{equation*} X_N = \mbox{span} \{z_i\}_{1\leq i \leq N}.
\end{equation*} 
If $u_N(t),z_j \in X_N$, then there exist $\ell_{N}(t),\ell_{z_j} \in \mathbb{R}^3$ and $r_{N}(t),r_{z_j} \in \mathbb{R}^3$ such that
\begin{align*}
 u_N(y,t)&= \ell_{N}(t) + r_{N}(t) \times y, \mbox{ for all }y \in \mc{S}_{0},\\ 
 z_j(y)&= \ell_{z_j} + r_{z_j} \times y, \mbox{ for all }y \in \mc{S}_{0}.
 %z_j(y)&= \ell_{z_j} + r_{\xi} \times y, \mbox{ for all }y \in \mc{S}_{0}. 
 \end{align*} 
Let us define the following quantities:
 \begin{equation*}
u_{\mc{S},N}= \ell_{N}(t) + r_{N}(t) \times y, \quad        z_{\mc{S},j}= \ell_{z_j} + r_{z_j} \times y, \mbox{ for all }(y,t) \in \mc{F}_{0}\times (0,T).
 \end{equation*}
 We are looking for $u_N, \rho_N$, the solution of the approximate problem, such that for some $T_N >0$, 
\begin{equation}\label{approx1} 
 u_N \in C^1([0,T_N],X_N),\quad \rho_N \in C^1([0,T_N], C^1(\overline{\Omega})) 
\end{equation} 
 satisfy for all $z_j \in X_N $:
\begin{align}\label{approx:integral equation}
 &\int\limits_{\mc{F}_{0}} \rho_N \frac{\partial u_N}{\partial s} \cdot z_j\,dy + m\ell'_N \cdot \ell_{z_j} + J_{0} r'_N \cdot r_{z_j} \nonumber\\
 &= -\int\limits_{\mc{F}_{0}} \left[(\rho_N(u_N-u_{\mc{S},N})\cdot \nabla)u_N\right]\cdot z_j\,dy
  - \int\limits_{\mc{F}_{0}} \operatorname{det} (\rho_N r_N, u_N, z_j)\,dy
 +  \operatorname{det}(m\ell_N,r_N,\ell_{z_j})  + \operatorname{det}(J_{0}r_N,r_N,r_{z_j}) \nonumber\\ &\quad -  2 \nu\int\limits_{\mc{F}_{0}} D(u_N) : D(z_j)\,dy
 - 2\nu\alpha\int\limits_{\partial\mc{S}_{0}} (u_N-u_{\mc{S},N})\cdot (z_j-z_{\mc{S},j})\,d\Gamma + 2\nu\alpha\int\limits_{\partial\mc{S}_{0}} w\cdot(z_j-z_{\mc{S},j})\,d\Gamma,
 \end{align}
 \begin{align}
 & \quad \frac{\partial\rho_N}{\partial t} + \operatorname{div}\,(\rho_N (u_N-u_{\mc{S},N})) =0,  \quad \rho_N(y) > 0 \mbox{ in } \mc{F}_0 \times (0,T_N), \label{approx2}\\
 &\quad u_N(0)=u_{0N},\quad \rho_N(0)=\rho_{0N} \label{approx3}.
 \end{align}
 Here $u_{0N}$ and $\rho_{0N}$ are functions satisfying
 \begin{align} 
  &u_{0N} \in X_N, \hspace{-5cm}& u_{0N} &\rightarrow u_0 \mbox{ in }L^2(\mc{F}_0) \mbox{ as } N\to \infty \label{u1}\\
  &\rho_{0N} \in C^1(\overline{\Omega}), \hspace{-5cm}& \rho_{0N} &\rightharpoonup \rho_0\mbox{ in }L^{\infty}(\Omega)\mbox{ weak-$*$ as }N\rightarrow
     \infty,\label{rho1}\\
 & \frac{1}{N} + \inf \rho_0 \leq \rho_{0N} \leq \frac{1}{N} + \sup \rho_0 \hspace{-5cm}&&\label{rho2}.
 \end{align}
 Further we have 
 \begin{equation*}
 \ell_{0N} \rightarrow \ell_0 \mbox{ in }\mathbb{R}^3, \quad
     \omega_{0N} \rightarrow \omega_0 \mbox{ in }\mathbb{R}^3 \mbox{ as } N\to\infty.
  \end{equation*}     
We study the local existence of $u_N$, $\rho_N$ by similar arguments as in \cite[Theorem 9]{MR1062395}, or in \cite[Chapter VI, Theorem VI.2.1]{MR2986590}. We define the Banach space $$E_N=C([0,T_N],X_N).$$ 
We will construct a map $$\mc{N}: E_N \rightarrow E_N$$ which allows to find a fixed point that is a solution to the approximate problem. We  do so in three steps.

Firstly, let $v_N \in E_N$ be given. We consider 
 \begin{equation}\label{transport:fixedpoint}
 \frac{\partial\rho_N}{\partial t} + \operatorname{div}\,(\rho_N (v_N-v_{\mc{S},N})) =0,
 \quad \rho_N(0)=\rho_{0N} \mbox{ in } \in \mathcal{F}_0 \times (0,T_N).
 \end{equation}
  We define the trajectory $Y^{N}=Y^{N}_{x,t}(s)$ of a particle located at $x$ at time $t$ as
\begin{equation*} %\label{tr}
\frac{dY^{N}}{ds}(s)= v_N(Y^N(s),s), \mbox { for all } s \geq 0, Y^N(t)=x.
\end{equation*}
Since $v_N \in C([0,T_N],X_N)$, we have $Y^N \in C^1([0,T_N];C^1(\bar {\Omega}))$. For fixed $v_N$, \eqref{transport:fixedpoint} has a unique solution $$\rho _N(x,t)=\rho_{N0}(Y^{N}_{x,t}(t)).$$ such that $\rho _{N}(0)=\rho _{N0}$.

Secondly, after the construction of $\rho_N$, we are looking for $u_N \in E_N$ satisfying
\begin{align}\label{momentum:fixedpoint}
 &\int\limits_{\mc{F}_{0}} \rho_N \frac{\partial u_N}{\partial s} \cdot z_j\,dy + m\ell'_N \cdot \ell_{z_j} + J_{0} r'_N \cdot r_{z_j} \nonumber \\ &=
 -\int\limits_{\mc{F}_{0}} \left[(\rho_N(v_N-v_{\mc{S},N})\cdot \nabla)u_N\right]\cdot z_j\,dy  - \int\limits_{\mc{F}_{0}} \operatorname{det} (\rho_N r_N, u_N, z_j)\,dy
 +  \operatorname{det}(m\ell_N,r_N,\ell_{z_j})  + \operatorname{det}(J_{0}r_N,r_N,r_{z_j}) \nonumber \\
  &\quad -  2 \nu\int\limits_{\mc{F}_{0}} D(u_N) : D(z_j)\,dy  - 2\nu\alpha\int\limits_{\partial\mc{S}_{0}} (u_N-u_{\mc{S},N})\cdot (z_j-z_{\mc{S},j})\,d\Gamma + 2\nu\alpha\int\limits_{\partial\mc{S}_{0}} w\cdot(z_j-z_{\mc{S},j})\,d\Gamma
 \end{align}
 for all $z_j \in X_N$. Now we seek the solution $u_N$ of \eqref{momentum:fixedpoint} in the form $$u_N(x,t)=\sum_{i=1}^{N}\alpha_{jN}(t)z_i(x), \mbox{ for any }j=1,2,...,N.$$ 
Let us introduce the matrices and vectors
\begin{equation*}
\mc{M}_N=((z_i,z_j)_{\mc{H}})_{1\leq i,j \leq N},\quad \alpha_N=(\alpha_{jN})_{1\leq j\leq N},\quad \mc{A}_N=(a(z_i,z_j))_{1\leq i,j \leq N},\quad \mc{C}=(c(w,z_j))_{1\leq j\leq N},
\end{equation*}
where 
\begin{align*}
a(u_N,v_N) &= -  2 \nu\int\limits_{\mc{F}_{0}} D(u_N) : D(v_N)\,dy
 - 2\nu\alpha\int\limits_{\partial\mc{S}_{0}} (u_N-u_{\mc{S},N})\cdot (v_N-v_{N,\mc{S}})\,d\Gamma,\\
 c(w,v)&= 2\nu\alpha\int\limits_{\partial\mc{S}_{0}} w\cdot(v-v_{\mc{S}})\,d\Gamma.
\end{align*}
Furthermore, for any $u,v \in \mathbb{R}^N$, $\mc{B}_N(u,v)=(\mc{B}_{Nj}(u,v))_{1\leq j \leq N}$, where
\begin{align*}
\mc{B}_{Nj}(u,v)=&-\sum\limits_{1\leq k\leq N}v_k\int\limits_{\mc{F}_{0}} \left[(\rho_N(v_N-v_{\mc{S},N})\cdot \nabla)z_k\right]\cdot z_j\,dy \\ &+\sum\limits_{1\leq i,k\leq N}u_iv_k\Big(- \int\limits_{\mc{F}_{0}} \operatorname{det} (\rho_N r_{z_i}, z_k, z_j)\,dy 
 +  \operatorname{det}(m\ell_{z_i},r_{z_k},\ell_{z_j})  + \operatorname{det}(J_{0}r_{z_i},r_{z_k},r_{z_j})\Big).
\end{align*}
 Thus, equation \eqref{momentum:fixedpoint} can be viewed as
 \begin{equation}\label{ODEform}
 \mc{M}_N\frac{d\alpha_{N}}{dt}=\mc{A}_N\alpha_{N} + \mc{B}_{N} + \mc{C},
 \end{equation}
 where $\alpha_N\in \mathbb{R}^N$ is the unknown vector, $\mc{A}_N, \mc{M}_N$ are $N\times N$ matrices and $\mc{B}_N,\mc{C}\in \mathbb{R}^N$ are vectors defined as above. As $(\cdot,\cdot)_{\mc{H}}$ is a scalar product on $L^2(\mathbb{R}^3)$ and $z_i$ is an orthonormal basis, we have that $\mc{M}_N$ is invertible. Hence, by Cauchy-Lipschitz theory, equation \eqref{ODEform} with the  corresponding initial condition has a unique solution in some interval $[0,T_N]$. As a consequence, there exists a unique $u_N \in E_N$ which satisfies \eqref{momentum:fixedpoint}.
 
Thirdly, given $v_N \in E_N$, we have obtained a unique couple $(\rho_N,u_N)$ that solves \eqref{transport:fixedpoint}--\eqref{momentum:fixedpoint}. Now we define the map $\mc{N}: E_N \rightarrow E_N$ by $$\mc{N}(v_N)=u_N.$$
Following the steps of \cite[Chapter VI, Theorem VI.2.1]{MR2986590}, we obtain that the map $\mc{N}$ has a fixed point in a suitable subset of $E_N$. This fixed point (denoted by $u_N$) along with $\rho_N$ is the solution of the nonlinear approximate problem \eqref{approx:integral equation}--\eqref{approx3}.
\vspace{.2cm}

\noindent\underline{Step 2: Global existence  }
 \vspace{.2cm}\\
Since the velocity is smooth, we obtain from \eqref{approx2} and \eqref{rho2} that 
 \begin{equation}\label{est:approx-rho}
 \frac{1}{N} + \inf \rho_0 \leq \rho_{N} \leq \frac{1}{N} + \sup \rho_0.
 \end{equation}
As in \eqref{trilinear} we have 
 \begin{equation}\label{trilinear-approx}
  \int\limits_{\mc{F}_{0}} \left[ (\rho(u_N-u_{\mc{S},N})\cdot \nabla)u_N\right] \cdot u_N\,dy= \frac{1}{2}\int\limits_{\mc{F}_{0}} \frac{\partial \rho_N}{\partial s}|u_N|^2\,dy.
 \end{equation}
 We take $z_j=u_N(t) \in X_N$ as the test function in \eqref{approx:integral equation}, and by using \eqref{trilinear-approx}, we obtain that
\begin{align*}
 &\frac{1}{2}\frac{d}{ds}\left(\int\limits_{\mc{F}_{0}} \rho_N |u_N|^2\,dy + m|\ell_N|^2 + J_{0} |r_N|^2 \right) +  2\nu\int\limits_{\mc{F}_{0}}|D(u_N)|^2\,dy + 2\nu\alpha\int\limits_{\partial\mc{S}_{0}} |u_N-u_{\mc{S},N}|^2\,d\Gamma \\
 &=2\nu\alpha\int\limits_{\partial\mc{S}_{0}} w\cdot (u_N-u_{\mc{S},N})\,d\Gamma.
 \end{align*}
 Integrating the above equation from $0$ to $t$ and using H\"{o}lder's inequality, we get
 \begin{align}\label{energy-approx}
 & \int\limits_{\mc{F}_{0}} \frac{1}{2}\rho_N |u_N|^2\,dy + \frac{m}{2}|\ell_N|^2
+ \frac{J_{0}}{2}|r_N|^2 + 2 \nu\int\limits_{0}^{t}\int\limits_{\mc{F}_{0}} |D(u_N)|^2\,dy\,ds +
\nu\alpha\int\limits_{0}^{t}\int\limits_{\partial\mc{S}_{0}} |u_N-u_{\mc{S},N}|^2\,d\Gamma\,ds \nonumber \\ & \leq \int\limits_{\mc{F}_{0}}\frac{1}{2}\rho_{0,N}
|u_{0,N}|^2\,dy + \frac{m}{2}|\ell_{0,N}|^2 + \frac{J_{0}}{2}|r_{0,N}|^2 +
\nu\alpha\int\limits_{0}^{t}\int\limits_{\partial\mc{S}_{0}}|w|^2\,d\Gamma\,ds.
 \end{align}
 We already know that the system \eqref{approx1}--\eqref{approx3} has a maximal solution in some time interval $[0,T_N]$ with $T_N > 0$. If $T_N < T$, then
 $\|u_N\|_{\mc{H}}$ must tend to $\infty$ as $t\rightarrow T$. But the energy inequality for approximate solution \eqref{energy-approx} suggests that
 $\|u_N\|_{\mc{H}}$ is finite as $t\rightarrow T$. Thus, $T_N=T$ and with the help of \eqref{est:approx-rho} and \eqref{energy-approx}, we have
 \begin{itemize}
 \item $u_N \mbox{ is bounded in } L^2(0,T; \mc{V}_{\Omega})$,
 \item $\rho_N$ is bounded in $L^{\infty}((0,T)\times \Omega)$,
 \item $\sqrt{\rho_N}u_N$ is bounded in $L^{\infty}(0,T; L^2(\Omega))$,
  \item $\ell_N$, $r_N$ are bounded in $L^{\infty}(0,T)$.
 \end{itemize}
 Furthermore, by using \cite[Proposition 7]{MR1062395} and \cite[Proposition 8]{MR1062395}, we obtain
 \begin{itemize}
 \item $\dfrac{\partial \rho_N}{\partial t}$ is bounded in $L^{2}(0,T; W^{-1,6}(\Omega)) \cap L^{\infty}(0,T; H^{-1}(\Omega))$,
 \item $\rho_Nu_N$ is bounded in $L^2(0,T; L^6(\Omega))\cap L^{\infty}(0,T; L^2(\Omega))$,
 \item $u_N \otimes \rho_N u_N$ is bounded in $L^{4/3}(0,T; L^2(\Omega))$, 
 \item $\rho_Nu_N$ is bounded in $N^{1/4,2}(0,T; W^{-1,3/2}(\Omega))$ (Nikolskii spaces are introduced in \eqref{Nikolskii}). 
 \end{itemize}
 \vspace{.2cm}
 
 \noindent \underline{Step 3: Compactness argument and convergence properties}
 \vspace{.2cm}\\
 We show that the approximate solution $(\rho_N,u_N)$ constructed in step~1 has a limit in suitable spaces and its limit is a solution to system \eqref{chg of var mass:fluid}--\eqref{chg of var initial cond} with \eqref{boundary-bdd domain}.
 
 Let $X\subset E \subset Y$ be Banach spaces and the imbedding $X \hookrightarrow E$ be compact.

 We recall Aubin-Lions-Simon Lemma (\cite[Lemma 4(ii)]{MR1062395}, \cite[Corollary 4]{MR916688}):  If $F$ is bounded in $L^{\infty}(0,T;X)$ and 
$\dfrac{\partial
 F}{\partial t}$ is bounded in $L^r(0,T;Y)$ for some $r>1$, then $F$ is relatively compact in $C([0,T];E)$. If we consider $X=L^{\infty}(\Omega)$, $E= W^{-1,\infty}(\Omega)$, $Y=W^{-1,6}(\Omega)$ and $r=2$, the properties of $\rho_N$ enlisted above imply
 \begin{equation*}
 \{\rho_N\}_{N\in \mathbb{N}} \mbox{ is relatively compact in }C([0,T];W^{-1,\infty}(\Omega)).
 \end{equation*}
 Moreover, \cite[Lemma 4(iv)]{MR1062395} states that if $X\subset E \subset Y$, $X \hookrightarrow E$ is compact and $F \in L^q(0,T;X)\cap N^{s,q}(0,T;Y)$, then $F$ is relatively compact in $L^q(0,T;E)$ for $s>0$,
 $1\leq q \leq \infty$. Hence, if we take $X=L^2(\Omega)$, $E=H^{-1}(\Omega)$, $Y=W^{-1,3/2}(\Omega)$, $q=2$ and $s=1/4$, the properties of
 $\rho_N$ enlisted above imply
 \begin{equation*}
 \{\rho_Nu_N\}_{N\in \mathbb{N}} \mbox{ is relatively compact in }L^2(0,T; H^{-1}(\Omega)).
 \end{equation*}
 Therefore, we can extract a subsequence of $\{\rho_N, u_N\}_{N\in \mathbb{N}}$,  relabelled the same, such that
 \begin{itemize}
 \item  $u_N \rightharpoonup u$ in $L^2(0,T; \mc{V}_{\Omega})$ weakly,
 \item $\rho_N \rightharpoonup \rho$ in $L^{\infty}((0,T)\times \Omega)$ weak-$*$ and in $C([0,T];W^{-1,\infty}(\Omega))$ strongly,
 \item  $\rho_Nu_N \rightharpoonup g$  in $L^{\infty}(0,T; L^2(\Omega))$ weak-$*$ and in $L^2(0,T; H^{-1}(\Omega))$ strongly,
 \item $u_N \otimes \rho_N(u_N-u_{\mc{S},N}) \rightharpoonup k$ in $L^{4/3}(0,T; L^2(\Omega))$ weakly.
 \end{itemize}
Let $\Omega \subset \mathbb{R}^3$ be a bounded set and let $1\leq r < 3$, $1\leq s \leq \infty$ with
$\dfrac{1}{r} + \dfrac{1}{s} \leq 1$. According to \cite[Lemma 3(iii)]{MR1062395}, the imbedding of the  product of two Sobolev spaces \begin{equation*} W^{1,r}(\Omega)\times
W^{-1,s}(\Omega)\rightarrow W^{-1,t}(\Omega), \quad \frac{1}{t}=\frac{1}{r}-\frac{1}{3}+\frac{1}{s}, \end{equation*}
 is continuous.

 Thus, if we consider $r=2$, $s=\infty$, then $t=6$ and the product is continuous from $H^1(\Omega) \times W^{-1,\infty}(\Omega)$ into
 $W^{-1,6}(\Omega)$. The strong convergence of $\rho_N \rightarrow \rho$ in $C([0,T];W^{-1,\infty}(\Omega))$ and the weak convergence of $u_N
 \rightharpoonup u$ in $L^2(0,T; \mc{V}_{\Omega})$ imply that
 \begin{equation*}
 \rho_Nu_N \rightharpoonup \rho u\mbox{ in }L^2(0,T; W^{-1,6}(\Omega))\mbox{ weakly}.
 \end{equation*}
 Hence applying $ \rho_Nu_N \rightharpoonup g \in L^{\infty}(0,T; L^2(\Omega))$ weak-$*$ and in $L^2(0,T; H^{-1}(\Omega))$ strongly,
  we get  $$g=\rho u.$$
 On the other hand, if we consider $r=2$, $s=2$, then $t=\dfrac{3}{2}$ and the product is continuous from $H^1(\Omega) \times H^{-1}(\Omega)$
 into $W^{-1,3/2}(\Omega)$. The weak convergence of $u_N \rightharpoonup u$ in $L^2(0,T; \mc{V}_{\Omega})$ and the strong convergence of $\rho_N u_N$ in $L^2(0,T;H^{-1}(\Omega))$ imply that
 \begin{equation*}
 u_N \otimes \rho_N(u_N-u_{\mc{S},N}) \rightharpoonup u \otimes (g-\rho u_{\mc{S}})\mbox{ in }L^1(0,T; W^{-1,3/2}(\Omega))\mbox{ weakly}.
 \end{equation*}
 Hence, $$k= u \otimes \rho(u-u_{\mc{S}}).$$
 We recall the interpolation inequality: let $p_1,p_2,q_1,q_2$ be four numbers in $[1,\infty]$. If $f\in L^{p_1}(0,T;L^{q_1}(\Omega)) \cap L^{p_2}(0,T;L^{q_2}(\Omega))$, then for all $\theta\in (0,1)$, the function $f\in L^{p}(0,T;L^{q}(\Omega))$
 with the estimate 
 \begin{equation*}
 \|f\|_{L^{p}(0,T;L^{q}(\Omega))} \leq \|f\|^{\theta}_{L^{p_1}(0,T;L^{q_1}(\Omega))}\|f\|^{1-\theta}_{L^{p_2}(0,T;L^{q_2}(\Omega))},
 \end{equation*}
 where 
 \begin{equation*}
 \dfrac{1}{p}=\dfrac{\theta}{p_1}+\dfrac{1-\theta}{p_2},\quad \dfrac{1}{q}=\dfrac{\theta}{q_1}+\dfrac{1-\theta}{q_2}.
 \end{equation*}
 Hence, we obtain
 \begin{align*}
 \|\sqrt{\rho_N}u_N\|_{L^{\frac{8}{3}}(0,T;L^4(\Omega))} \leq & \|\sqrt{\rho_N}u_N\|^{1/4}_{L^{\infty}(0,T;L^2(\Omega))}\|\sqrt{\rho_N}u_N\|^{3/4}_{L^{2}(0,T;L^6(\Omega))} \\\leq & C \|\sqrt{\rho_N}u_N\|^{1/4}_{L^{\infty}(0,T;L^2(\Omega))}\|u_N\|^{1/4}_{L^{2}(0,T;H^1(\Omega))}.
 \end{align*}
 Thus, the sequence $u_N \otimes \rho_N(u_N-u_{\mc{S},N})$ is bounded in the space $L^{\frac{4}{3}}(0,T;L^2(\Omega))$. We recall a result of weak convergence (\cite[Chapter II, Proposition II.2.10]{MR2986590}): let $E,F$ be Banach spaces with $F$ reflexive and $\{x_n\}$ be a sequence in $E\cap F$ such that there exists $x\in E$ satisfying $\{x_n\}$ is bounded in $F$, $\{x_n\}$ converges weakly to $x$ in $E$. Then, $\{x_n\}$ converges weakly to $x$ in $F$. 
 
 In our case, if we consider $E=L^1(0,T; W^{-1,3/2}(\Omega))$ and $F=L^{\frac{4}{3}}(0,T;L^2(\Omega))$, then we have 
 \begin{equation*}
 u_N \otimes \rho_N(u_N-u_{\mc{S},N}) \rightharpoonup u \otimes \rho(u- u_{\mc{S}})\mbox{ in }L^{\frac{4}{3}}(0,T;L^2(\Omega))\mbox{ weakly}.
 \end{equation*} 
 
 \vspace{.2cm}
 \noindent\underline{Step 4: $N\rightarrow \infty$}
 \vspace{.2cm}\\
 As $\rho_N \rightarrow \rho$ in $\mathfrak{D}'((0,T)\times \Omega)$ and $\rho_Nu_N \rightarrow \rho u$ in $\mathfrak{D}'((0,T)\times \Omega)$ as
 $N\rightarrow \infty$, we can pass to the limit in $\mathfrak{D}'((0,T)\times \Omega)$ in the equation \eqref{approx2} and obtain
 \begin{equation}\label{limit:rho}
\frac{\partial\rho}{\partial t} + \operatorname{div}\,(\rho (u-u_{\mc{S}}))=0,\mbox{ in } \mathfrak{D}'((0,T)\times \Omega) \mbox{ and }\rho\geq 0.
\end{equation}
 The convergence properties of $\rho_N \rightarrow \rho$ and $u_N \rightarrow u$ as $N\rightarrow \infty$, (discussed in Step 2) yield
 \begin{equation*} \rho\in L^{\infty}((0,T) \times \Omega),\quad u \in L^2(0,T; \mc{V}_{\Omega}).
\end{equation*} 
Similarly, we can also check that the limiting variables $(\rho,u)$ satisfy the renormalized continuity equation. Precisely, using the regularization lemma (see \cite[Lemma 2.3, page 43]{MR1422251}), we can deduce that as in \cite[Theorem 2.4]{MR1422251}: if $u \in L^2(0,T; \mc{V}_{\Omega})$, $\rho\in L^{\infty}((0,T) \times \Omega)$, $\operatorname{div} u=0$ a.e.\ and $\rho$ satisfies \eqref{limit:rho}, then for any $b \in C^1(\mathbb{R})$, $b(\rho)$ also satisfies
\eqref{limit:rho}. Thus, the continuity equation is also satisfied in the renormalized sense. This regularization lemma and renormalization arguments also help us to establish (see \cite[Chapter VI, Theorem VI.1.9]{MR2986590}, \cite[Theorem 2.4]{MR1422251}) the fact that:
\begin{equation*}
\rho_N \rightarrow \rho \mbox{ in }C([0,T];L^q(\Omega)), \mbox{ for any }q\in[1,\infty).
\end{equation*} 

In order to prove
the existence of a weak solution $(\rho,u)$, it remains to verify the relation \eqref{weakform-momentum} for all $t \in [0,T]$ and $\phi \in
C^{\infty}([0,T]; \mc{H}_{\Omega})$ such that  $\phi|_{\overline{\mc{F}}_{0}} \in C^{\infty}([0,T]\times \overline{\mc{F}}_{0}))$.  To this end, we pass to the limit in equation \eqref{approx:integral equation}. We know from \cite[Chapter V, Lemma V.1.2]{MR2986590} that the set
%the functions $\phi$ of the form \begin{equation*} \phi(x,t)=\sum\limits_{k=1}^N \xi_k(x)\psi_k(t), \end{equation*} where $N$ is an integer and 
\begin{equation*}
\Big\{\xi_k(y)\psi_k(t) \mid \xi_k \in \mc{Y},
\psi_k \in C^{\infty}([0,T])\Big\} \mbox{ is dense in }C^{\infty}([0,T]; \mc{H}_{\Omega}).
\end{equation*}
Hence, it suffices to  verify the relation
\eqref{weakform-momentum} for all $\phi(y,t) =\xi(y)\psi(t)$, where $\xi \in \mc{Y}$, $\psi \in C^{\infty}([0,T])$.

We consider equation \eqref{approx:integral equation} with $z_j$ replaced with $\phi(y,t) =\xi(y)\psi(t)$ and use the relation
\begin{equation*} \int\limits_{\mc{F}_{0}} \left[
(\rho_N(u_N-u_{\mc{S},N})\cdot \nabla)u_N\right] \cdot \phi\,dy = \int\limits_{\mc{F}_0} \frac{\partial \rho_N}{\partial t}u_N\cdot \phi\,dy -
\int\limits_{\mc{F}_0} \left[u_N \otimes \rho_N(u_N-u_{\mc{S},N})\right] : \nabla \phi \,dy, \end{equation*}
 which follows similarly as \eqref{trilinear}. We then  obtain  \begin{align*}
 & \int\limits_{\mc{F}_{0}}\frac{d}{ds} (\rho_N u_N(y,s))\cdot \phi(y,s)\,dy + m\ell'_N(s)\cdot \ell_{\phi}(s)
 + J_{0}r'_N(s)\cdot r_{\phi}(s) \\ 
 & =  \int\limits_{\mc{F}_{0}} \left[u_N \otimes \rho_N(u_N-u_{\mc{S},N})\right] : \nabla \phi\,dy 
  - \int\limits_{\mc{F}_{0}} \operatorname{det} (\rho_N r_N, u_N, \phi)\,dy
+ \operatorname{det}(m\ell_N,r_N,\ell_{\phi}) + \operatorname{det}(J_{0}r_N,r_N,r_{\phi}) \\ &\quad - 2 \nu\int\limits_{\mc{F}_{0}} D(u_N) : D(\phi)\,dy
 - 2\nu\alpha\int\limits_{\partial\mc{S}_{0}} (u_N-u_{\mc{S},N})\cdot (\phi-\phi_{\mc{S}})\ d\Gamma
 + 2\nu\alpha\int\limits_{\partial\mc{S}_{0}} w\cdot (\phi-\phi_{\mc{S}})\,d\Gamma.
 \end{align*}
The main problematic term is the first one on the left hand side. To deal with this term, we integrate from $0$ to $t$ and apply the product rule so that the derivative is on the test function
$\phi$. In this term as well as in the other terms, we can then pass to the limit as $N \rightarrow \infty$  by using the
convergence results obtained in Step~3: 
\begin{align*}
u_N &\rightharpoonup u\mbox{ in }L^2(0,T; \mc{V}_{\Omega})\mbox{ weakly},\\
 \rho_Nu_N &\rightharpoonup \rho u\mbox{ in }L^2(0,T; W^{-1,6}(\mc{F}_0))\mbox{ weakly},\\
 u_N \otimes \rho_N(u_N-u_{\mc{S},N}) &\rightharpoonup u \otimes \rho(u- u_{\mc{S}})\mbox{ in }L^{\frac{4}{3}}(0,T;L^2(\mc{F}_0))\mbox{ weakly}.
 \end{align*}
Finally, we obtain
 \begin{align*}
 &\int\limits_0^t\frac{d}{ds}\left(\int\limits_{\mc{F}_{0}} \rho u(y,s)\cdot \phi(y,s)\,dy + m\ell(s)\cdot \ell_{\phi}(s)
 + J_{0}r(s)\cdot r_{\phi}(s)\right)\,ds \\
& = \int\limits_{0}^{t}\left( \int\limits_{\mc{F}_{0}} \rho u \cdot \frac{\partial \phi}{\partial s}\,dy + m\ell\cdot \ell'_{\phi} + J_{0}r\cdot
r'_{\phi}\right)\,ds  +\int\limits_{0}^{t}\int\limits_{\mc{F}_{0}} \left[u \otimes \rho(u-u_{\mc{S}})\right] : \nabla \phi\,dy\,ds \\
&\quad -
\int\limits_{0}^{t}\int\limits_{\mc{F}_{0}} \operatorname{det} (\rho r, u, \phi)\,dy\,ds + \int\limits_{0}^{t}
\operatorname{det}(m\ell,r,\ell_{\phi})\,ds
 + \int\limits_{0}^{t} \operatorname{det}(J_{0}r,r,r_{\phi}) \,ds
 \\
& \quad - 2 \nu\int\limits_{0}^{t}\int\limits_{\mc{F}_{0}} D(u) : D(\phi)\,dy\,ds
 - 2\nu\alpha\int\limits_{0}^{t}\int\limits_{\partial\mc{S}_{0}} (u-u_{\mc{S}})\cdot (\phi-\phi_{\mc{S}})\ d\Gamma\,ds
 + 2\nu\alpha\int\limits_{0}^{t}\int\limits_{\partial\mc{S}_{0}} w\cdot (\phi-\phi_{\mc{S}})\,d\Gamma\,ds.
 \end{align*}
 This equality clearly yields \eqref{weakform-momentum}.
 \vspace{.5cm}\\
 \underline{Step 5: Existence of pressure}
 \vspace{.2cm}\\
 Let us introduce
 \begin{equation*}
 \mathscr{V}=\left\{\mathfrak{D}(\mc{F}_0) \mid \mathrm{div}\, \phi = 0 \mbox{ in }{\Omega} \mbox{ and }D(\phi)=0 \mbox{ in }\mc{S}_{0} \right\}.
 \end{equation*}
 We choose an appropriate test function $\phi \in C^{\infty}([0,T];\mathscr{V})$ in the relation \eqref{weakform-momentum} (so that the boundary
 terms vanish after integrate by parts) to obtain
 \begin{equation*}
 \left\langle\frac{\partial }{\partial t}(\rho u) + \operatorname{div}\left[u\otimes \rho(u-u_{\mc{S}})\right] + \rho r \times u -
 \operatorname{div}(2\nu D(u)), \phi\right\rangle=0, \quad \forall \,\phi \in C^{\infty}([0,T];\mathscr{V}).
 \end{equation*}
 Moreover, according to \cite[Proposition 7(ii)]{MR1062395},
 \begin{equation*}
 \frac{\partial }{\partial t}(\rho u) + \operatorname{div}\left[u\otimes \rho(u-u_{\mc{S}})\right] + \rho r \times u - \operatorname{div}(2\nu D(u))
 \in W^{-1,\infty}(0,T; H^{-1}(\mc{F}_0)).
 \end{equation*}
 Thus, by \cite[Lemma 2]{MR1062395}, there exists $p\in W^{-1,\infty}(0,T; L^{2}(\mc{F}_0))$ such that
 \begin{equation*}
 \frac{\partial }{\partial t}(\rho u) + \operatorname{div}\left[u\otimes \rho(u-u_{\mc{S}})\right] + \rho r \times u - \operatorname{div}(2\nu D(u))= \nabla p, \mbox{ in } \mathcal{F}_{0} \times (0,T).
 \end{equation*}
\end{proof} 
%%%%%%%%%%%%%%%%%%%%%%%%%%%%%%%%%%%%%%%%%%%%%% 

\section{Proof of \cref{main theorem}}\label{existence-unbounded} 

We prove the existence of a weak solution in the unbounded domain by using \cref{existence:bounded domain} (existence of solution in a ball $B_R$) and letting $R\rightarrow \infty$.

\begin{proof} Let $R_0$ be such that $\mc{S}_0 \subset B(0,{R_0}/{2})$. Choose $R>R_0$ and consider a smooth function $\chi_R : \mathbb{R}^3\rightarrow
\mathbb{R}^3$ as in \cite[Theorem 1]{MR3165279} defined by $$ \chi_R(y)=\begin{cases} y \mbox{ for }y\in B(0,R),\\ \dfrac{R}{|y|}y \mbox{ for }y\in
\mathbb{R}^3 \setminus B(0,R) \end{cases} $$ 
and set 
\begin{equation*} u_{\mc{S},R}(y,t) = \ell(t) + r(t) \times
\chi_R(y). \end{equation*} Observe that for any $r\in \mathbb{R}^3$ and $\phi\in \mc{V}$: \begin{equation*} (r\times \chi_R)\cdot \nabla \phi
\rightarrow (r\times y)\cdot \nabla \phi \mbox{ in }L^2(\mathbb{R}^3)\mbox{ as }R\rightarrow \infty, \end{equation*} and \begin{equation*}
u_{\mc{S},R}(y,t)\rightarrow \ell(t)+r(t)\times y=u_{\mc{S}}(y,t)\mbox{ as }R\rightarrow \infty. \end{equation*} \cref{existence:bounded domain}
suggests that the bounds obtained in \eqref{energy inequality} for $(\rho_R,u_R)$ are uniform with respect to $R$. Thus the maximum principle and
energy inequality \eqref{energy inequality} help us to conclude \begin{align*} \|\rho_R\|_{L^{\infty}((0,T)\times \mathbb{R}^3)} &\leq C,\\
\|\sqrt{\rho_R}u_R\|_{L^{\infty}(0,T;L^2(\mathbb{R}^3))} &\leq C,\\ \|u_R\|_{L^2(0,T;\mc{V})} &\leq C. 
\end{align*} 
This implies that up to an
extraction of a subsequence we have $\rho_R \rightharpoonup \rho$ in $L^{\infty}$ weak-$*$ in $C([0,T];L^{q}_{\operatorname{loc}} (\mathbb{R}^3))$ for any $q\in [1,\infty)$ and $u_R \rightharpoonup u$ in $L^2(0,T;\mc{V})$ weak. Following
\cite[Section 1]{MR1744863}, we can show that $\rho_R \rightarrow \rho$ in $C([0,T];W^{-1,2}(\mathbb{R}^3))$, $\rho_Ru_R\rightarrow \rho u$ in $\mathfrak{D}'((0,T)\times \mathbb{R}^3)$ and $\rho$ satisfies
  \begin{equation*}
\frac{\partial\rho}{\partial t} + \operatorname{div}\,(\rho (u-u_{\mc{S}}))=0\mbox{ in } \mathfrak{D}'((0,T)\times \mathbb{R}^3). \end{equation*} 
By \cite[Lemma 1]{MR1744863}, $\rho$ also satisfies the renormalized continuity equation. On the other hand, regularization lemma \cite[Lemma 2.3]{MR1422251} and renormalization arguments help us to establish (see \cite[Chapter VI, Theorem VI.1.9]{MR2986590}, \cite[Theorem 2.4]{MR1422251}) that
\begin{equation*}
\|\rho_R - \rho\|_{L^{\infty}([0,T];L^q_{\operatorname{loc}} (\mathbb{R}^3))} \rightarrow 0 \mbox{ as } R\to \infty \mbox{ for any }q\in[1,\infty).
\end{equation*}
Thus, we have $\rho_R \rightarrow \rho$ strongly in $C([0,T];L^{q}_{\operatorname{loc}} (\mathbb{R}^3))$.
 It remains to establish
identity \eqref{weakform-momentum}. We already know from \cref{existence:bounded domain} that $(\rho_R,u_R)$ satisfy \eqref{weakform-momentum}, i.e., 
\begin{align}\label{weakform-momentum-bounded}
 & \int\limits_{\mc{F}_{0}} \rho_R u_R(y,t)\cdot \phi(y,t)\,dy + m\ell(t)\cdot \ell_{\phi}(t)
 + J_{0}r(t)\cdot r_{\phi}(t)
  - \int\limits_{\mc{F}_{0}}q_0\cdot \phi(y,0)\,dy - m\ell(0)\cdot \ell_{\phi}(0) - J_{0}r(0)\cdot r_{\phi}(0)  \nonumber \\
& =  \int\limits_{0}^{t} \int\limits_{\mc{F}_{0}} \rho_R u_R \cdot \frac{\partial \phi}{\partial s}\,dy \,ds+ \int\limits_{0}^{t}m\ell\cdot
\ell'_{\phi}\,ds + \int\limits_{0}^{t}J_{0}r\cdot r'_{\phi}\,ds +\int\limits_{0}^{t}\int\limits_{\mc{F}_{0}} \left[u_R \otimes
\rho_R(u_R-u_{\mc{S},R})\right] : \nabla \phi\,dy\,ds \nonumber\\
& \quad - \int\limits_{0}^{t}\int\limits_{\mc{F}_{0}} \operatorname{det} (\rho_R r, u_R,
\phi)\,dy\,ds + \int\limits_{0}^{t} \operatorname{det}(m\ell,r,\ell_{\phi})\,ds
 + \int\limits_{0}^{t} \operatorname{det}(J_{0}r,r,r_{\phi}) \,ds  - 2 \nu\int\limits_{0}^{t}\int\limits_{\mc{F}_{0}} D(u_R) : D(\phi)\,dy\,ds \nonumber \\
& \quad
 - 2\nu\alpha\int\limits_{0}^{t}\int\limits_{\partial\mc{S}_{0}} (u_R-u_{\mc{S},R})\cdot (\phi-\phi_{\mc{S}})\ d\Gamma\,ds
 + 2\nu\alpha\int\limits_{0}^{t}\int\limits_{\partial\mc{S}_{0}} w\cdot (\phi-\phi_{\mc{S}})\,d\Gamma\,ds.
 \end{align}
 We have the following convergence results: 
 \begin{align*}
u_R &\rightharpoonup u\mbox{ in }L^2(0,T; \mc{V})\mbox{ weakly (see step $3$ of \cref{existence:bounded domain})},\\
u_R \otimes \rho_R(u_R-u_{\mc{S},R}) &\rightharpoonup u \otimes \rho(u- u_{\mc{S}})\mbox{ in }L^{\frac{4}{3}}(0,T;L^2(\mathbb{R}^3))\mbox{ weakly (see step $3$ of \cref{existence:bounded domain})},\\
  \sqrt{\rho_R}u_R &\rightarrow \sqrt{\rho} u\mbox{ strongly in }L^2(0,T;L^2_{\operatorname{loc}} (\mathbb{R}^3))\mbox{(see \cite[Proposition 1, Section 2.4]{MR1744863})},\\
 \rho_R &\rightarrow \rho\mbox{ strongly in }C([0,T];L^{q}_{\operatorname{loc}} (\mathbb{R}^3)), \ \forall\ q\in [1,\infty). 
\end{align*}
These above mentioned convergence properties allow us to pass to the limit $R\rightarrow \infty$ in each term of \eqref{weakform-momentum-bounded} and to obtain the identity \eqref{weakform-momentum}.
  \end{proof}
  %%%%%%%%%%%%%%%%%%%%%%%%%%
  \section{Discussion} \label{sec:5}
  In this section, we discuss two variants of our system and give a few remarks. Firstly, in the case of a positive initial density we mention the stronger results that can be obtained. Then, we will concentrate on the case when the fluid viscosity depends on the density. 
  
  \subsection{Positive initial density}   
We can improve the results when the initial density is away from zero $(\inf \rho_0 > 0)$, i.e., when the fluid does not contain any vacuum regions. Actually, for the approximate solution in the bounded domain, we obtain 
\begin{itemize}
\item For all $N$, $\inf \rho_N > 0$.
\item $\{u_N\}$ is bounded in $L^{\infty}(0,T; L^2(\Omega)) \cap L^2(0,T; \mc{V}_{\Omega})$.
\item Under the translation operator $\tau_h : f \mapsto f(\cdot +h)$, we can obtain from \cite[Chapter VI, Lemma VI.2.5]{MR2986590} that 
\begin{equation*}
\|\tau_h u_N-u_N\|_{L^2((0,T-h),L^2(\Omega))} \leq Ch^{1/4},
\end{equation*}
i.e., $\{u_N\}$ is bounded in $N^{1/4,2}(0,T;L^2(\Omega))$.
\item As $\mc{V}_{\Omega} \hookrightarrow L^2(\Omega)$ is compact and $u_N \in L^2(0,T;\mc{V}_{\Omega})\cap N^{1/4,2}(0,T;L^2(\Omega))$, we have that, according to \cite[Lemma 4(iv)]{MR1062395}, $u_N$ is relatively compact in $L^2(0,T;L^2(\Omega))$. This allows us to achieve the strong convergence of $u_N$ to $u$ in $L^2(0,T;L^2(\Omega))$ which is the same as in the case of a homogeneous fluid (i.e., when the fluid has constant density). 
\end{itemize}   
 Thus, in this case it is easier to justify the passage of the approximate problem \eqref{approx:integral equation} and to obtain identity \eqref{weakform-momentum}. Precisely, we obtain the following results:
 \begin{itemize}
 \item As $\inf \rho_0 > 0$, we obtain $1/ \rho \in L^{\infty}((0,T)\times\mathbb{R}^3)$. By using $u=\rho u \frac1\rho$, we have $u \in L^{\infty}(0,T; L^2(\mathbb{R}^3))$.
 \item As in \cite[Proposition 8(ii)]{MR1062395}, $u \in N^{1/4,2}(0,T;L^2(\Omega))$.
 %\item $u$ is continuous with respect to the time variable with values in $L^2(\Omega)$.
 \end{itemize}
 
  \subsection{Fluid viscosity depends on the density}
We discuss how to deal with the case if the fluid viscosity depends on density. Here we consider the fluid viscosity $\nu$ as a $C^1$ function of the fluid density and it satisfies the following:
\begin{equation}\label{hypo:viscosity}
\mbox{ there exists }\nu_1,\nu_2 >0\mbox{ such that } \nu_1 \leq  \nu(\eta) \leq \nu_2 \mbox{ for all } \eta \in \mathbb{R} \mbox{ and }\nu'\mbox{ is bounded}.
\end{equation}
In this case, we have the following relation analogous to \eqref{weakform-momentum}:
\begin{align}\label{weakform-momentum-varden}
 & \int\limits_{\mc{F}_{0}} \rho u(y,t)\cdot \phi(y,t)\,dy + m\ell(t)\cdot \ell_{\phi}(t)
 + J_{0}r(t)\cdot r_{\phi}(t)
  - \int\limits_{\mc{F}_{0}}q_0\cdot \phi(y,0)\,dy - m\ell(0)\cdot \ell_{\phi}(0) - J_{0}r(0)\cdot r_{\phi}(0)  \nonumber \\
& =  \int\limits_{\mc{F}_{0}}\int\limits_{0}^{t} \rho u \cdot \frac{\partial \phi}{\partial s}\,ds\,dy + \int\limits_{0}^{t}m\ell\cdot \ell'_{\phi}\,ds
+ \int\limits_{0}^{t}J_{0}r\cdot r'_{\phi}\,ds +\int\limits_{0}^{t}\int\limits_{\mc{F}_{0}} \left[u \otimes \rho(u-u_{\mc{S}})\right] : \nabla
\phi\,dy\,ds  \nonumber \\ &\quad - \int\limits_{0}^{t}\int\limits_{\mc{F}_{0}} \operatorname{det} (\rho r, u, \phi)\,dy\,ds  + \int\limits_{0}^{t}
\operatorname{det}(m\ell,r,\ell_{\phi})\,ds
 + \int\limits_{0}^{t} \operatorname{det}(J_{0}r,r,r_{\phi}) \,ds
- 2 \int\limits_{0}^{t}\int\limits_{\mc{F}_{0}}\nu(\rho) D(u) : D(\phi)\,dy\,ds \nonumber \\
 & \quad 
 - 2\alpha\int\limits_{0}^{t}\int\limits_{\partial\mc{S}_{0}}\nu(\rho) (u-u_{\mc{S}})\cdot (\phi-\phi_{\mc{S}})\ d\Gamma\,ds + 2\alpha\int\limits_{0}^{t}\int\limits_{\partial\mc{S}_{0}}\nu(\rho)\ w\cdot (\phi-\phi_{\mc{S}})\,d\Gamma\,ds.
 \end{align}
We can also define weak solutions in this case as previously:
 \begin{defin}
Let $T> 0$. A pair $(\rho,u)$ is a weak solution to system \eqref{chg of var mass:fluid}--\eqref{chg of var initial cond} with $\nu=\nu(\rho)$ if the following conditions hold true:
\begin{itemize} \item $\rho \geq 0, \quad \rho \in L^{\infty}((0,T)\times \mathbb{R}^3)$.  
\item $u \in L^2(0,T; \mc{V}),\quad \rho|u|^2 \in L^{\infty}(0,T; L^1(\mathbb{R}^3))$.  
\item The equation of continuity \eqref{chg of var mass:fluid} is satisfied in the weak sense. Also, a renormalized continuity equation holds in a weak sense.
 \item Balance of linear momentum holds in a weak sense, i.e., for all  $\phi \in C^{\infty}([0,T]; \mc{H})$ such that
    $\phi|_{\overline{\mc{F}_{0}}} \in C^{\infty}([0,T]\times \overline{\mc{F}_{0}})$ and for all $t \in [0,T]$, the relation \eqref{weakform-momentum-varden} holds. % with $\mc{F}_0=\mathbb{R}^3\setminus \mc{S}_0$.
\end{itemize}
  \end{defin}
  Now we state the existence result of a weak solution, when viscosity is a function of fluid density:
  \begin{theorem}\label{main theorem-varden}
 Let $\mc{S}_0$ be a bounded, closed, simply connected set with smooth boundary and $\mc{F}_0= \mathbb{R}^3\setminus \mc{S}_0$. Assume that the self-propelled motion $w$ satisfies \eqref{self-propel normal}--\eqref{self-propel tangent}, the fluid viscosity satisfies \eqref{hypo:viscosity} and that there exist
 constants $c_1, c_2 > 0$ such that 
 \begin{align*}
&\rho_0 \geq 0,\, \rho_0 |_{\mc{S}_0} \in [c_1, c_2],\, \rho_0 \in L^{\infty}(\mathbb{R}^3),\,  u_{0} \in \mc{H},\\ & q_0=0\mbox{ a.e.\  on }\{\rho_0 = 0\},\ \frac{q_0^2}{\rho_0} \in L^1(\mc{F}_0).
 \end{align*}
 Then for an arbitrary $T>0$, there exists a weak solution $(\rho,u)$ to system \eqref{chg of var mass:fluid}--\eqref{chg of var initial
 cond} with $\nu=\nu(\rho)$.
  Moreover, we have 
 \begin{equation*}
 \inf_{\mathbb{R}^3}\rho_0 \leq \rho \leq \sup_{\mathbb{R}^3}\rho_0, \quad \rho \in C([0,T]; L^q_{\operatorname{loc}} (\mathbb{R}^3))\,\forall\ q\in [1,\infty),\quad p \in W^{-1,\infty}(0,T; L^2(\mc{F}_0)),
 \end{equation*}
and for a.e.\
  $t \in [0,T]$, the energy inequality 
    \begin{align*} 
 &\int\limits_{\mc{F}_{0}} \frac{1}{2}\rho |u|^2\,dy + \frac{m}{2}|\ell|^2
 + \frac{J_{0}}{2}|r|^2 + 2\nu_1  \int\limits_{0}^{t}\int\limits_{\mc{F}_{0}}  |D(u)|^2\,dy\,ds +
 \alpha\nu_1\int\limits_{0}^{t}\int\limits_{\partial\mc{S}_{0}} |u-u_{\mc{S}}|^2\,d\Gamma\,ds  \\
& \leq \int\limits_{\mc{F}_{0}}\frac{1}{2}\rho_0 |u_0|^2\,dy + \frac{m}{2}|\ell_0|^2 + \frac{J_{0}}{2}|r_0|^2 +
\alpha\nu_2 \int\limits_{0}^{t}\int\limits_{\partial\mc{S}_{0}}|w|^2\,d\Gamma\,ds.
 \end{align*}
  holds.
  \end{theorem}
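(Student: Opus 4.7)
The plan is to follow the two-step strategy used for \cref{main theorem}: first establish existence on a bounded ball $B_R$ via a Galerkin approximation, then let $R\to\infty$. The only novelty is the density-dependent viscous term, and since $\nu_1\leq \nu\leq \nu_2$ with $\nu'$ bounded, this dependence is essentially controlled for free.

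I would first replicate the Galerkin scheme of Step~1 in the proof of \cref{existence:bounded domain}. Given $v_N\in E_N$, the transport equation for $\rho_N$ is unchanged, and the ODE for the velocity coefficients $\alpha_N$ still has the form $\mc{M}_N \alpha_N' = \mc{A}_N(\rho_N)\alpha_N + \mc{B}_N + \mc{C}$. Now $\mc{A}_N$ depends continuously on $\rho_N$ through $\nu(\rho_N)$ but remains coercive uniformly in $N$ thanks to $\nu\geq \nu_1$, so Cauchy--Lipschitz still applies and a fixed point is produced exactly as before. Testing by $u_N$ and using $\nu_1\leq \nu(\rho_N)\leq \nu_2$ reproduces the energy bound with $\nu$ replaced by $\nu_1$ on the left and $\nu_2$ on the right, so the uniform estimates on $u_N$, $\rho_N$, $\sqrt{\rho_N}u_N$, $\partial_t\rho_N$, $\rho_N u_N$ and $u_N\otimes \rho_N u_N$ carry over verbatim, as do the compactness consequences in Step~3.

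The critical point is the passage to the limit in the viscous term. From Step~3 of the proof of \cref{existence:bounded domain} one inherits $\rho_N\to \rho$ strongly in $C([0,T]; L^q(\Omega))$ for every $q\in[1,\infty)$ and $u_N\rightharpoonup u$ weakly in $L^2(0,T;\mc{V}_\Omega)$. Since $\nu'$ is bounded, $\nu$ is globally Lipschitz, hence
\begin{equation*}
\|\nu(\rho_N)-\nu(\rho)\|_{L^\infty(0,T;L^q(\Omega))} \leq \|\nu'\|_\infty \|\rho_N-\rho\|_{L^\infty(0,T;L^q(\Omega))} \to 0;
\end{equation*}
combined with $\nu(\rho_N)\leq \nu_2$ and dominated convergence, $\nu(\rho_N)\to \nu(\rho)$ strongly in every $L^p((0,T)\times \Omega)$ with $p<\infty$. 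A strong-weak pairing against $D(u_N)\rightharpoonup D(u)$ in $L^2(L^2)$ then yields
\begin{equation*}
\int_0^t\int_{\mc{F}_0}\nu(\rho_N)D(u_N):D(\phi)\,dy\,ds \to \int_0^t\int_{\mc{F}_0}\nu(\rho)D(u):D(\phi)\,dy\,ds,
\end{equation*}
for every admissible $\phi$, because $\nu(\rho_N)D(\phi)\to \nu(\rho)D(\phi)$ strongly in $L^2(L^2)$. The slip boundary integrals with $\nu(\rho_N)$ are handled analogously via the trace theorem and the strong convergence of $\nu(\rho_N)$ on $\partial\mc{S}_0$. This delivers \eqref{weakform-momentum-varden} on the bounded domain.

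For the energy inequality, the lower-semicontinuity estimate
\begin{equation*}
\int_0^t\int_{\mc{F}_0} \nu(\rho)|D(u)|^2\,dy\,ds \leq \liminf_{N\to\infty}\int_0^t\int_{\mc{F}_0} \nu(\rho_N)|D(u_N)|^2\,dy\,ds
\end{equation*}
follows from $\sqrt{\nu(\rho_N)}\to \sqrt{\nu(\rho)}$ strongly in every $L^p$, which upgrades to $\sqrt{\nu(\rho_N)}D(u_N)\rightharpoonup \sqrt{\nu(\rho)}D(u)$ weakly in $L^2$; then $\nu_1 \int |D(u)|^2 \leq \int \nu(\rho)|D(u)|^2$ produces the claimed left-hand side, while $\int \nu(\rho)|w|^2 \leq \nu_2 \int |w|^2$ bounds the right-hand side. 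The passage $R\to\infty$ proceeds exactly as in the proof of \cref{main theorem}, because the same strong-weak pairing handles the variable-viscosity integral once $\rho_R\to\rho$ strongly in $C([0,T];L^q_{\operatorname{loc}}(\mathbb{R}^3))$. The main obstacle, if any, is precisely the strong convergence of $\nu(\rho_N)$ in spaces compatible with the weak convergence of $D(u_N)$, but the Lipschitz regularity of $\nu$ combined with the $L^q_{\operatorname{loc}}$ strong convergence of the density granted by the renormalized continuity equation delivers it without difficulty.
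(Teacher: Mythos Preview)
Your proposal is correct and follows essentially the same approach as the paper: both reduce to the constant-viscosity case and identify the only new issue as passing to the limit in the $\nu(\rho_N)$-terms, which is handled via the Lipschitz estimate $\|\nu(\rho_N)-\nu(\rho)\|_{L^\infty(0,T;L^q)}\leq \|\nu'\|_\infty\|\rho_N-\rho\|_{L^\infty(0,T;L^q)}$ combined with the strong $C([0,T];L^q)$ convergence of the density and the weak $L^2(0,T;\mc{V}_\Omega)$ convergence of $u_N$. If anything, you supply more detail than the paper, which does not spell out the lower-semicontinuity argument for the energy inequality or the treatment of the boundary integrals.
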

  The proof of this theorem is similar as before.  We start with an approximation for the system on a bounded domain; later we pass to the unbounded domain. Additionally to before, we have to justify the passing of the limits in the terms: 
  \begin{align}\label{dif-term}
  & -2\int\limits_{0}^{t}\int\limits_{\mc{F}_{0}}\nu(\rho_N) D(u_N) : D(\phi)\,dy\,ds
 - 2\alpha\int\limits_{0}^{t}\int\limits_{\partial\mc{S}_{0}}\nu(\rho_N) (u_N-u_{\mc{S},N})\cdot (\phi-\phi_{\mc{S}})\ d\Gamma\,ds  \nonumber \\
 & + 2\alpha\int\limits_{0}^{t}\int\limits_{\partial\mc{S}_{0}}\nu(\rho_N)\ w\cdot (\phi-\phi_{\mc{S}})\,d\Gamma\,ds.
 \end{align}
 Now to do this, observe that we already have 
 \begin{equation*}
 \rho_N \rightarrow \rho\mbox{ strongly in }C([0,T];L^{q}(\Omega)) \quad \forall\ q\in [1,\infty).
 \end{equation*}
 With the help of hypothesis \eqref{hypo:viscosity} for the fluid viscosity, we have for all $q\in [1,\infty)$:
 \begin{equation*}
 \|\nu(\rho_N)-\nu(\rho)\|_{L^{\infty}(0,T;L^q(\Omega)} \leq \|\nu'\|_{\infty}\|\rho_N-\rho\|_{L^{\infty}(0,T;L^q(\Omega)}.
 \end{equation*}
 Thus, we have strong convergence of the viscosity 
 \begin{equation*}
 \nu(\rho_N) \rightarrow \nu(\rho) \mbox{ strongly in }L^{\infty}(0,T;L^q(\Omega) \quad \forall\ q\in [1,\infty).
 \end{equation*}
 The above strong convergence of viscosity and $$u_N \rightharpoonup u\mbox{ in }L^2(0,T; \mc{V}_{\Omega})\mbox{ weakly}$$ allow us to pass to the limit in the terms of \eqref{dif-term} $N\rightarrow \infty$. Similarly, we can proceed with the case for unbounded domain.

\subsection{Further remarks and open problems}

\begin{remark}
 Silvestre \cite{MR1953783} proved the global existence of weak solution of a fluid-structure system in the case of Dirichlet boundary conditions. She applied a global transformation and extended the self-propelled motion from $\partial \mc{S}_0$ to the whole domain. It was not possible for us to adopt her approach to our setting with Navier slip boundary conditions in order to show the corresponding result. The reasons are that we could not recover the Navier-slip boundary condition and that the extension of the self-propelled motion $w$   to the whole domain requires initial data for $w$. 
\end{remark}

\begin{remark} \label{rem:normal}In our work we require the normal component of the self-propelled velocity $W\cdot N$ and $w\cdot n$, respectively, to be zero at the boundary of the rigid body. Actually, if $w\cdot n \neq 0$, then the interface conditions \eqref{chg of var boundary-1}--\eqref{chg of var boundary-2} are replaced by 
\begin{align*} u\cdot n &= (u_{\mc{S}}+w) \cdot n \mbox{ for }y \in \partial \mc{S}_{0}, \\ (D(u)n)\times n &= -\alpha (u-u_{\mc{S}} - w)\times n \mbox{ for }y \in \partial \mc{S}_{0}. \end{align*}
In that case, instead of energy inequality \eqref{energy inequality}, we obtain
\begin{align*}
 & \int\limits_{\mc{F}_{0}} \frac{1}{2}\rho |u|^2\,dy + \frac{m}{2}|\ell|^2
 + \frac{J_{0}}{2}|r|^2 + 2 \nu\int\limits_{0}^{t}\int\limits_{\mc{F}_{0}} |D(u)|^2\,dy\,ds + \nu\alpha\int\limits_{0}^{t}\int\limits_{\partial\mc{S}_{0}} |u-u_{\mc{S}}|^2\,d\Gamma \, ds \nonumber \\
& \leq \int\limits_{\mc{F}_{0}}\frac{1}{2}\rho_0 |u_0|^2\,dy + \frac{m}{2}|\ell_0|^2 + \frac{J_{0}}{2}|r_0|^2 + \nu\alpha\int\limits_{0}^{t}\int\limits_{\partial\mc{S}_{0}}|w|^2\,d\Gamma\,ds+ \frac{1}{2}\int\limits_{0}^{t}\int\limits_{\partial \mc{S}_{0}} \rho |u|^2(w\cdot n) \,d\Gamma \, ds.
 \end{align*} 
 Here, we do not know how to bound the right hand side of the above inequality by given quantities (initial conditions and self-propelled force) as it involves the unknowns $\rho$ and $u$. That is why we impose the condition $w\cdot n=0$.
%The mathematical reason is that we otherwise would have to handle cubic terms in $u$, which seems out of reach. 
\end{remark}

\begin{remark}
As in all the works mentioned above, the elastic setting is missing. That is, it would be desirable to consider a system with a body which may be elastic and not just rigid. The difficulty of that setting is that a transformation to a fixed domain is not possible since the body may change its shape as time evolves. Hence different methods need to be developed.
\end{remark}

%%%%%%%%%%%%%%%%%%%%%%%%%%%%%%%%%
\section{Appendix}\label{appendix}

{\bf Derivation of the weak formulation \eqref{weakform-momentum}:} Let $\phi\in C^{\infty}([0,T]; \mc{H})$ such that $\phi|_{\overline{\mc{F}}_{0}} \in C^{\infty}([0,T] \times \overline{\mc{F}}_{0})$. We show that \eqref{weakform-momentum} is the weak form of our system \eqref{chg of var mass:fluid}--\eqref{chg of var initial cond}. To this end we multiply the momentum equation \eqref{chg of var momentum:fluid} formally by $\phi$ and integrate over $\mc{F}_{0} \times (0,t)$, which yields
 \begin{equation}\label{modified-momentum}
\int\limits_{0}^{t}\int\limits_{\mc{F}_{0}}\left[\frac{\partial }{\partial s}(\rho u) + \operatorname{div}\left[u \otimes \rho(u-u_{\mc{S}})\right]+
\rho r \times u + \nabla p - \operatorname{div}(2\nu D(u)) \right]\cdot \phi\,dy\,ds= 0. \end{equation} 
In the following we will consider each term of this identity separately; in particular we will integrate by parts several times and will apply the other equations of our system.\\
{\it{First term}}. \begin{align}\label{chg of var momentum:fluid-term1} \int\limits_{\mc{F}_{0}}\int\limits_{0}^{t} 
\frac{\partial }{\partial s}(\rho u) \cdot \phi\,ds\,dy = 
-\int\limits_{\mc{F}_{0}}\int\limits_{0}^{t} \rho u \cdot \frac{\partial \phi}{\partial s}\,ds\,dy  + 
\int\limits_{\mc{F}_{0}} \Big( \rho u(y,t)\cdot \phi(y,t) - q_0\cdot \phi(y,0)\Big)\,dy .  \end{align} \vspace{0.2cm}\\ 
{\it{Second term}}. \begin{equation*} \int\limits_{0}^{t}\int\limits_{\mc{F}_{0}} \operatorname{div}\left[u \otimes \rho(u-u_{\mc{S}})\right]\cdot \phi\,dy\,ds= - \int\limits_{0}^{t}\int\limits_{\mc{F}_{0}} \left[u \otimes \rho(u-u_{\mc{S}})\right] : \nabla \phi \,dy\,ds+ \int\limits_{0}^{t}\int\limits_{\partial\mc{S}_{0}} \rho((u-u_{\mc{S}}) \cdot n)(u \cdot \phi)\,d\Gamma\,ds, \end{equation*} where for the boundary term, we have used the relation
 \begin{equation*} (u \otimes v)w= (v\cdot w)u, \quad \mbox{for}\quad u,v,w \in \mathbb{R}^3. \end{equation*} 
As $u\cdot n = u_{\mc{S}} \cdot n \mbox{ for }y \in \partial \mc{S}_{0}$, 
we obtain 
%\begin{equation*} \int\limits_{0}^{t}\int\limits_{\partial\mc{S}_{0}} \rho((u-u_{\mc{S}}) \cdot n)(u \cdot \phi))\,d\Gamma\,ds=0, \end{equation*} 
%Thus, we have 
\begin{equation*} %\label{chg of var momentum:fluid-term2} 
\int\limits_{0}^{t}\int\limits_{\mc{F}_{0}} \operatorname{div}\left[u \otimes \rho(u-u_{\mc{S}})\right]\cdot \phi\,dy\,ds= - \int\limits_{0}^{t}\int\limits_{\mc{F}_{0}} \left[u \otimes \rho(u-u_{\mc{S}})\right] : \nabla \phi \,dy\,ds. \end{equation*} \vspace{0.2cm}\\ 
{\it{Third term}}. \begin{equation*} %\label{chg of var momentum:fluid-term3} 
\int\limits_{0}^{t}\int\limits_{\mc{F}_{0}} (\rho r \times u) \cdot \phi\,dy\,ds = \int\limits_{0}^{t}\int\limits_{\mc{F}_{0}} \operatorname{det} (\rho r, u, \phi)\,dy\,ds. \end{equation*} \vspace{0.2cm}\\ 
{\it{Fourth term}}. \begin{equation*}%\label{chg of var momentum:fluid-term4} 
\int\limits_{0}^{t}\int\limits_{\mc{F}_{0}} \nabla p \cdot \phi\,dy\,ds = \int\limits_{0}^{t}\int\limits_{\partial \mc{S}_{0}} pn\cdot \phi\,d\Gamma\,ds = \int\limits_{0}^{t}\int\limits_{\partial \mc{S}_{0}} pn\cdot \phi_{\mc{S}}\,d\Gamma\,ds = \int\limits_{0}^{t}\int\limits_{\partial \mc{S}_{0}} pn\cdot (\ell_{\phi} + r_{\phi} \times y)\,d\Gamma\,ds, \end{equation*} 
with $\phi_{\mc{S}}$ as in \eqref{eqn:phiS}.  \vspace{0.2cm}\\
{\it{Fifth term}}.
 We analyze this term as in \cite[Lemma 1]{MR3165279} 
and obtain 
\begin{align}\label{chg of var momentum:fluid-term5}  &-\int\limits_{0}^{t}\int\limits_{\mc{F}_{0}} \operatorname{div}(2\nu D(u)) \cdot \phi\,dy\,ds \nonumber \\ &= 2 \nu\int\limits_{0}^{t}\int\limits_{\mc{F}_{0}} D(u) : D(\phi)\,dy\,ds - 2\nu \int\limits_{0}^{t}\int\limits_{\partial\mc{S}_{0}} D(u)n \cdot \ell_{\phi}\,d\Gamma\,ds \nonumber \\ &\quad  - 2\nu \int\limits_{0}^{t}\int\limits_{\partial\mc{S}_{0}} ( y \times D(u)n) \cdot r_{\phi}\,d\Gamma\,ds  - 2\nu \int\limits_{0}^{t}\int\limits_{\partial\mc{S}_{0}} (D(u)n \times n)\cdot [(\phi-\phi_{\mc{S}}) \times n]\,d\Gamma\,ds \end{align} 
After plugging the terms \eqref{chg of var momentum:fluid-term1}--\eqref{chg of var momentum:fluid-term5} into the relation \eqref{modified-momentum}, we get 
\begin{align*} &-\int\limits_{0}^{t} \int\limits_{\mc{F}_{0}}\rho u \cdot \frac{\partial \phi}{\partial s}\,dy\,ds + \int\limits_{\mc{F}_{0}} \Big(\rho u(y,t)\cdot \phi(y,t) - q_0\cdot \phi(y,0)\Big)\,dy   - \int\limits_{0}^{t}\int\limits_{\mc{F}_{0}} \left[u \otimes \rho(u-u_{\mc{S}})\right] : \nabla \phi\,dy\,ds\\
&\quad  + \int\limits_{0}^{t}\int\limits_{\mc{F}_{0}} \operatorname{det} (\rho r, u, \phi)\,dy\,ds
+ 2 \nu\int\limits_{0}^{t}\int\limits_{\mc{F}_{0}} D(u) : D(\phi)\,dy\,ds \\ &= 2\nu \int\limits_{0}^{t}\int\limits_{\partial\mc{S}_{0}} (D(u)n \times
n)\cdot [(\phi-\phi_{\mc{S}}) \times n]\,d\Gamma\,ds  -\int\limits_{0}^{t}\int\limits_{\partial \mc{S}_{0}} pn\cdot (\ell_{\phi} + r_{\phi} \times
y)\,d\Gamma\,ds \\
 &\quad + 2\nu \int\limits_{0}^{t}\int\limits_{\partial\mc{S}_{0}} D(u)n \cdot \ell_{\phi}\,d\Gamma\,ds + 2\nu
 \int\limits_{0}^{t}\int\limits_{\partial\mc{S}_{0}} ( y \times D(u)n) \cdot r_{\phi}\,d\Gamma\,ds .
\end{align*} 
We use equations \eqref{chg of var linear momentum:body}--\eqref{chg of var angular momentum:body} to rewrite the preceding
relation as \begin{align}\label{bdary replace} & -\int\limits_{0}^{t} \int\limits_{\mc{F}_{0}} \rho u \cdot \frac{\partial \phi}{\partial s}\,dy \,ds +
 \int\limits_{\mc{F}_{0}} \Big(\rho u(y,t)\cdot \phi(y,t) - q_0\cdot \phi(y,0)\Big)\,dy   - \int\limits_{0}^{t}\int\limits_{\mc{F}_{0}} \left[u \otimes
\rho(u-u_{\mc{S}})\right] : \nabla \phi\,dy\,ds \nonumber\\ & \quad+ \int\limits_{0}^{t}\int\limits_{\mc{F}_{0}} \operatorname{det} (\rho r, u, \phi)\,dy\,ds + 2
\nu\int\limits_{0}^{t}\int\limits_{\mc{F}_{0}} D(u) : D(\phi)\,dy\,ds \nonumber\\
& = 2\nu \int\limits_{0}^{t}\int\limits_{\partial\mc{S}_{0}} (D(u)n \times n)\cdot
[(\phi-\phi_{\mc{S}}) \times n]\,d\Gamma\,ds \nonumber \\
& \quad - \int\limits_{0}^{t}m\ell'\cdot \ell_{\phi}\,ds + \int\limits_{0}^{t}(m\ell \times r)\cdot \ell_{\phi}\,ds - \int\limits_{0}^{t}J_{0}r'\cdot
 r_{\phi}\,ds + \int\limits_{0}^{t}(J_{0}r \times r)\cdot r_{\phi}\,ds .
\end{align}
Now, integration by parts with respect to time yields
  \begin{align*} %\label{by parts in time}
   - \int\limits_{0}^{t}m\ell'\cdot \ell_{\phi}\,ds - \int\limits_{0}^{t}J_{0}r'\cdot r_{\phi}\,ds 
  & = \int\limits_{0}^{t}m\ell\cdot \ell'_{\phi}\,ds -
  m\ell(t)\cdot \ell_{\phi}(t)\,ds
  + m\ell(0)\cdot \ell_{\phi}(0) \nonumber\\
  & \quad + \int\limits_{0}^{t}J_{0}r\cdot r'_{\phi}\,ds
  -J_{0}r(t)\cdot r_{\phi}(t) + J_{0}r(0)\cdot r_{\phi}(0).
  \end{align*}
Regarding the first term on the right hand side of \eqref{bdary replace}, we observe that by \eqref{chg of var boundary-2} 
 \begin{align*}
 \int\limits_{\partial\mc{S}_{0}} (D(u)n \times n)\cdot
[(\phi-\phi_{\mc{S}}) \times n]\,d\Gamma
 = 
-\alpha\int\limits_{\partial\mc{S}_{0}} [(u-u_{\mc{S}}-w) \times n]\cdot [(\phi-\phi_{\mc{S}}) \times n] \ d\Gamma.
 \end{align*}
The identity $(A\times B)\cdot(C\times D)=(A\cdot C)(B\cdot D)-(A\cdot D)(B\cdot C)$ for any $A,B,C,D \in \mathbb{R}^3$ together with $(u-u_{\mc{S}}-w) \cdot n =0$ by \eqref{self-propel normal} and \eqref{chg of var boundary-1} yield
 \begin{align} \label{eqn:oldprop}
 \int\limits_{\partial\mc{S}_{0}} (D(u)n \times n)\cdot
[(\phi-\phi_{\mc{S}}) \times n]\,d\Gamma
 = -\alpha\int\limits_{\partial\mc{S}_{0}} (u-u_{\mc{S}} -w)\cdot (\phi-\phi_{\mc{S}}) d\Gamma.
 \end{align}
Putting \eqref{bdary replace}--\eqref{eqn:oldprop} together, we obtain the weak form  \eqref{weakform-momentum} of our system.

%%%%%%%%%%%%%%%%%%%%%%%%%%%%%%%%%%%%%%%%%%%%%%%%

\section*{Acknowledgements}
 The work of \v S. Ne\v casov\'a was supported by the Czech Science Foundation grant GA19-04243S in the framework of RVO 67985840. This work was initiated while \v S.N.\ and A.R. were visiting W\"urzburg, \v S.N. as a Giovanni-Prodi visiting professor.
\bibliography{reference} \bibliographystyle{siam}

\begin{thebibliography}{10}

\bibitem{Alouges}
{\sc F.~Alouges, A.~DeSimone, and A.~Lefebvre}, {\em Optimal strokes for low
  {R}eynolds number swimmers: an example}, J. Nonlinear Sci., 18 (2008),
  pp.~277--302.

\bibitem{SarkaSlipRigidStrong}
{\sc H.~A. Baba, N.~V. Chemetov, {\v{S}}.~Ne\v{c}asov\'{a}, and B.~Muha}, {\em
  Strong solutions in {$L^2$} framework for fluid-rigid body interaction
  problem. {M}ixed case}, Topol. Methods Nonlinear Anal., 52 (2018),
  pp.~337--350.

\bibitem{FSIforBIO}
{\sc T.~Bodn\'{a}r, G.~P. Galdi, and {\v{S}}.~Ne\v{c}asov\'{a}}, {\em
  Fluid-Structure Interaction and Biomedical Applications},
  Birkh{\"a}user/Springer, Basel, 2014.

\bibitem{ParticlesInFlow}
\leavevmode\vrule height 2pt depth -1.6pt width 23pt, {\em Particles in flows},
  Advances in Mathematical Fluid Mechanics, Birkh\"{a}user/Springer, Cham,
  2017.

\bibitem{MR2986590}
{\sc F.~Boyer and P.~Fabrie}, {\em Mathematical tools for the study of the
  incompressible {N}avier-{S}tokes equations and related models}, vol.~183 of
  Applied Mathematical Sciences, Springer, New York, 2013.

\bibitem{Bressan}
{\sc A.~Bressan}, {\em Impulsive control of {L}agrangian systems and locomotion
  in fluids}, Discrete Contin. Dyn. Syst., 20 (2008), pp.~1--35.

\bibitem{Bucur2010}
{\sc D.~Bucur, E.~Feireisl, and {\v{S}}.~Ne{\v{c}}asov{\'a}}, {\em Boundary
  behavior of viscous fluids: influence of wall roughness and friction-driven
  boundary conditions}, Arch. Ration. Mech. Anal., 197 (2010), pp.~117--138.

\bibitem{ChambrionMunnier}
{\sc T.~Chambrion and A.~Munnier}, {\em On the locomotion and control of a
  self-propelled shape-changing body in a fluid}, J. of Nonlinear Science,  (to
  appear).

\bibitem{ChemetovSarka}
{\sc N.~V. Chemetov and {\v S}.~Ne{\v c}asov\'a}, {\em The motion of the rigid
  body in the viscous fluid including collisions. {G}lobal solvability result},
  Nonlinear Anal. Real World Appl., 34 (2017), pp.~416--445.

\bibitem{ConcaRigid00}
{\sc C.~Conca, J.~S.~M. H., and M.~Tucsnak}, {\em Existence of solutions for
  the equations modelling the motion of a rigid body in a viscous fluid}, Comm.
  Partial Differential Equations, 25 (2000), pp.~1019--1042.

\bibitem{CumTak}
{\sc P.~Cumsille and T.~Takahashi}, {\em Wellposedness for the system modelling
  the motion of a rigid body of arbitrary form in an incompressible viscous
  fluid}, Czechoslovak Math. J., 58 (2008), pp.~961--992.

\bibitem{MR1744863}
{\sc B.~Desjardins}, {\em Global existence results for the incompressible
  density-dependent {N}avier-{S}tokes equations in the whole space},
  Differential Integral Equations, 10 (1997), pp.~587--598.

\bibitem{DE2}
{\sc B.~Desjardins and M.~J. Esteban}, {\em Existence of weak solutions for the
  motion of rigid bodies in a viscous fluid}, Arch. Ration. Mech. Anal., 146
  (1999), pp.~59--71.

\bibitem{DE}
\leavevmode\vrule height 2pt depth -1.6pt width 23pt, {\em On weak solutions
  for fluid-rigid structure interaction: compressible and incompressible
  models}, Comm. Partial Differential Equations, 25 (2000), pp.~1399--1413.

\bibitem{DL}
{\sc R.~J. DiPerna and P.~L. Lions}, {\em Ordinary differential equations,
  transport theory and sobolev spaces}, Invent. Math., 98 (1989), pp.~511--547.

\bibitem{GAL1}
{\sc G.~Galdi}, {\em On the steady self-propelled motion of a body in a viscous
  incompressible fluid}, Arch. Rat. Mech. Anal., 148 (1999), pp.~53--88.

\bibitem{G2}
{\sc G.~P. Galdi}, {\em On the motion of a rigid body in a viscous liquid: a
  mathematical analysis with applications}, in Handbook of mathematical fluid
  dynamics, {V}ol. {I}, North-Holland, Amsterdam, 2002, pp.~653--791.

\bibitem{GGH13}
{\sc M.~Geissert, K.~G\"otze, and M.~Hieber}, {\em {$L^p$}-theory for strong
  solutions to fluid-rigid body interaction in {N}ewtonian and generalized
  {N}ewtonian fluids}, Trans. Amer. Math. Soc., 365 (2013), pp.~1393--1439.

\bibitem{GVHil3}
{\sc D.~G{\'e}rard-Varet and M.~Hillairet}, {\em Existence of weak solutions up
  to collision for viscous fluid-solid systems with slip}, Comm. Pure Appl.
  Math., 67 (2014), pp.~2022--2075.

\bibitem{GVHil2}
{\sc D.~G{\'e}rard-Varet, M.~Hillairet, and C.~Wang}, {\em The influence of
  boundary conditions on the contact problem in a 3{D} {N}avier-{S}tokes flow},
  J. Math. Pures Appl., 9 (2015), pp.~1--38.

\bibitem{Masmoudi2010}
{\sc D.~G{\'e}rard-Varet and N.~Masmoudi}, {\em Relevance of the slip condition
  for fluid flows near an irregular boundary}, Comm. Math. Phys., 295 (2010),
  pp.~99--137.

\bibitem{GunzRigid00}
{\sc M.~D. Gunzburger, H.-C. Lee, and G.~A. Seregin}, {\em Global existence of
  weak solutions for viscous incompressible flows around a moving rigid body in
  three dimensions}, J. Math. Fluid Mech., 2 (2000), pp.~219--266.

\bibitem{JagerMikelic2001}
{\sc W.~J{\"a}ger and A.~Mikeli{\'c}}, {\em On the roughness-induced effective
  boundary conditions for an incompressible viscous flow}, J. Differential
  Equations, 170 (2001), pp.~96--122.

\bibitem{MR1422251}
{\sc P.-L. Lions}, {\em Mathematical topics in fluid mechanics. {V}ol. 1},
  vol.~3 of Oxford Lecture Series in Mathematics and its Applications, The
  Clarendon Press, Oxford University Press, New York, 1996.
\newblock Incompressible models, Oxford Science Publications.

\bibitem{MaityTucsnak18}
{\sc D.~Maity and M.~Tucsnak}, {\em {$L^p$}-{$L^q$} maximal regularity for some
  operators associated with linearized incompressible fluid-rigid body
  problems}, 710 (2018), pp.~175--201.

\bibitem{Starovoitov02}
{\sc J.~A.~S. Mart\'{i}n, V.~Starovoitov, and M.~Tucsnak}, {\em Global weak
  solutions for the two-dimensional motion of several rigid bodies in an
  incompressible viscous fluid}, Arch. Ration. Mech. Anal., 161 (2002),
  pp.~113--147.

\bibitem{MR3165279}
{\sc G.~Planas and F.~Sueur}, {\em On the ``viscous incompressible fluid +
  rigid body'' system with {N}avier conditions}, Ann. Inst. H. Poincar\'e Anal.
  Non Lin\'eaire, 31 (2014), pp.~55--80.

\bibitem{SanmartinTakahashiTucsnak}
{\sc J.~San~Mart{\'{\i}}n, T.~Takahashi, and M.~Tucsnak}, {\em A control
  theoretic approach to the swimming of microscopic organisms}, Quart. Appl.
  Math., 65 (2007), pp.~405--424.

\bibitem{Serre}
{\sc D.~Serre}, {\em Chute libre d'un solide dans un fluide visqueux
  incompressible}, Japan J. Appl. Math., 4 (1987), pp.~99--110.

\bibitem{SigalottiVivalda}
{\sc M.~Sigalotti and J.-C. Vivalda}, {\em Controllability properties of a
  class of systems modelling swimming microscopic organisms}, ESAIM Control
  Optim. Calc. Var.
\newblock to appear.

\bibitem{MR1953783}
{\sc A.~L. Silvestre}, {\em On the self-propelled motion of a rigid body in a
  viscous liquid and on the attainability of steady symmetric self-propelled
  motions}, J. Math. Fluid Mech., 4 (2002), pp.~285--326.

\bibitem{Sil1}
\leavevmode\vrule height 2pt depth -1.6pt width 23pt, {\em On the slow motion
  of a self-propelled rigid body in a viscous incompressible fluid}, J. Math.
  Anal. Appl., 274 (2002), pp.~203--227.

\bibitem{MR916688}
{\sc J.~Simon}, {\em Compact sets in the space {$L^p(0,T;B)$}}, Ann. Mat. Pura
  Appl. (4), 146 (1987), pp.~65--96.

\bibitem{MR1062395}
\leavevmode\vrule height 2pt depth -1.6pt width 23pt, {\em Nonhomogeneous
  viscous incompressible fluids: existence of velocity, density, and pressure},
  SIAM J. Math. Anal., 21 (1990), pp.~1093--1117.

\bibitem{Starovoitov}
{\sc V.~N. Starovoitov}, {\em Solvability of the problem of the self-propelled
  motion of several rigid bodies in a viscous incompressible fluid}, Comput.
  Math. Appl., 53 (2007), pp.~413--435.

\bibitem{Takahashi03}
{\sc T.~Takahashi}, {\em Analysis of strong solutions for the equations
  modeling the motion of a rigid-fluid system in a bounded domain}, Adv.
  Differential Equations, 8 (2003), pp.~1499--1532.

\bibitem{WangFluidSolid}
{\sc C.~Wang}, {\em Strong solutions for the fluid-solid systems in a 2-{D}
  domain}, Asymptot. Anal., 89 (2014), pp.~263--306.

\end{thebibliography}

\end{document}